\newtheorem{thm}{Theorem}
\newtheorem{prop}[thm]{Proposition}
\newtheorem{lem}[thm]{Lemma}
\newtheorem{defn}[thm]{Definition}
\newcommand{\LR}{\Leftrightarrow}
\newcommand{\RA}{\Rightarrow}
\newcommand{\LA}{\Leftarrow}
\newcommand{\rapf}{\n $\RA:$\ }
\newcommand{\lapf}{\n $\LA:$\ }
\newcommand{\n} {\noindent}
\newcommand{\Pf}{\mathcal{P}_{\hspace{-0.2mm}\mathrm{fin}}   }
\newcommand{\UU}{\mathbb{U}}
\newcommand{\ddiam}{\delta_{\mathrm{diam}}}
\newcommand{\dstein}{\delta_{\mathrm{Steiner}}}
\renewcommand{\hat}{\widehat}
\title{A Universal Separable Diversity}
\author{David Bryant\addressmark{1}\thanks{Email: \email{david.bryant@otago.ac.nz}} 
\and Andr\'{e} Nies\addressmark{2}\thanks{Email: \email{andre@cs.auckland.ac.nz}}
\and Paul Tupper\addressmark{3}\thanks{Email: \email{pft3@sfu.ca}}}
\address{\addressmark{1}Dept.\ of Mathematics and Statistics, University of Otago, New Zealand.\\
\addressmark{2}Dept.\ of Computer Science, University of Auckland, New Zealand. \\
\addressmark{3}Dept.\ of Mathematics, Simon Fraser University, Canada.}
\keywords{Diversities, Urysohn space, Kat\v{e}tov functions, universality, ultrahomogeneity}
\begin{document}
\maketitle

\begin{abstract}
The Urysohn space is a separable complete metric space with two fundamental properties: (a) \emph{universality}: every separable metric space can be isometrically embedded in it; (b) \emph{ultrahomogeneity}: every finite isometry between two finite subspaces  can be extended to an auto-isometry of the whole   space. The Urysohn space is uniquely determined up to isometry within separable metric spaces by these two properties. 
We introduce an  analogue of the Urysohn space for diversities, a recently developed variant  of the  concept of a metric space.
In a diversity any finite set of points is assigned a non-negative value, extending the notion of a metric which only applies to unordered   pairs of points. 
We construct the unique separable complete diversity that it is ultrahomogeneous and universal  with respect to separable diversities.
\end{abstract}

\section{Introduction}

  Urysohn~\cite{urysohn1927espace} in 1927 constructed a remarkable metric space which is now named after him. The Urysohn space is the unique (up to isometry) separable complete metric space with the following two properties: (a) \emph{universality}: all separable metric spaces can be isometrically embedded within it; (b) \emph{ultrahomogeneity}: any isometry between two finite subspaces of the Urysohn space can be extended to an auto-isometry of the whole space.  

The property of universality is straightforward to grasp, and   holds for several    separable complete metric spaces,  such as $C[0,1]$. The property of ultrahomogeneity is less  known. Recall that homogeneity of a metric space means that given any two points $x, y$ in the space, there is an automorphism (or self-isometry) of the space that maps $x$ to $y$. Likewise, a space is 2-homogeneous if for every pair of pairs $(x_1,x_2)$ and $(y_1,y_2)$ such that $d(x_1,x_2)=d(y_1,y_2)$, there is an automorphism of the space taking $x_1$ to $y_1$ and $x_2$ to $y_2$. For any $k \geq 1$, $k$-homogeneity is defined similarly. Ultrahomogeneity just extends this property to any pair of isometric finite subsets of the space. An example of a complete separable ultrahomogeneous space is the separable infinite-dimensional Hilbert space $\ell_2$; see Melleray \cite{Melleray2008a}.
Urysohn established that the Urysohn space is the unique (up to isometry) separable metric space satisfying both universality and ultrahomogeneity \cite{Melleray2008a}.

Here we construct the analogue of the Urysohn space for {\em diversities}, a generalization of the concept of metric spaces wherein all finite subsets, and not just pairs of points, are assigned a non-negative value.
A diversity is a pair $(X,\delta)$ where $X$ is a set and $\delta$ is a function from the finite subsets of $X$ to $\mathbb{R}$ satisfying  
\begin{quotation}
\noindent (D1) $\delta(A) \geq 0$, and $\delta(A) = 0$ if 
and only if 
$|A|\leq 1$. \\
(D2) If $B \neq \emptyset$ then $\delta(A\cup B) + \delta(B \cup C) \geq \delta(A \cup C)$
\end{quotation}
for all finite $A, B, C \subseteq X$.  
Diversities were   introduced in \cite{Bryant12}. They   form an extension of the  concept of a metric space. Indeed, every diversity has an {\em induced metric}, given by $d(a,b) = \delta(\{a,b\})$ for all $a,b \in X$. Note also that  $\delta$ is {\em monotonic}: $A \subseteq B$ implies $\delta(A) \leq \delta(B)$. Also $\delta$ is {\em subadditive on sets with nonempty intersection}: $\delta(A \cup B) \leq \delta(A) + \delta(B)$ when $A \cap B \neq \emptyset$ \cite[Prop. 2.1]{Bryant12}.
We say that a diversity $(X,\delta)$ is complete if its induced metric $(X,d)$ is complete \cite{Poelstra13},  and that a diversity is separable if its induced metric is separable.

Our main goal is to construct the diversity analog  $(\mathbb{U}, \delta_{\mathbb{U}})$ of the Urysohn metric space. It is determined uniquely by being universal for separable diversities, and  ultrahomogeneous in the sense that isometric finite subdiversities are  automorphic. 

 The construction follows the same approach as Kat\v{e}tov's construction of the Urysohn universal metric space \cite{katetov1986universal}. Starting with any diversity $(X,\delta)$,  we denote by $E(X)$  the set of   one-point extensions of $X$.  Since $E(X)$ turns out to not be separable under the natural metric,  we instead consider extensions with \emph{finite support}, which provides a separable diversity $E(X,\omega)$ in which $(X,\delta)$ is naturally embedded. Repeating  this procedure we obtain a nested sequence of separable diversities. The  analogue of the Urysohn metric space is constructed as  the completion of the direct limit  of all these diversities. Finally we show that this complete separable diversity has the diversity analogue of  Urysohn's extension property, and hence  is universal and ultrahomogeneous. 
 
Another aspect of metric space theory that has been succesfully carried over to diversities is tight span theory \cite{dress84}. The tight span of a metric space $(X,d)$ can be viewed as the set of all \emph{minimal} members of $E(X)$. The connections between the tight span of a metric space, the space of all one-point extensions $E(X)$, and the Urysohn space are explored in \cite{aksoy2016}. It is shown, for example, that the tight span of the Urysohn space is not separable. Similar themes have been developed for diversities. In \cite{Bryant12} tight span theory was studied  for diversities, where the tight span of a diversity $(X,\delta)$ consists of all minimal one-point extensions with an appropriate diversity, and therefore can be seen as a subset of $E(X)$.


{\bf Acknowledgement}.  DB was supported by a University of Otago Strategic Grant. AN was partially supported by the Marsden fund of New Zealand. Some of this work was carried out while AN visited the Institute for Mathematical Sciences, Singapore. PT was supported by an NSERC Discovery Grant, an NSERC Discovery Grant and a Tier 2 Canada Research Chair.

\section{Background and Preliminaries}  \label{s: background}
%
Recall from above that any diversity $(X,\delta)$ has an induced metric $(X,d)$ where $d(a,b)=\delta(\{a,b\})$ for all $a,b \in X$. Conversely, given any metric space $(X,d)$,  consider the  diversities that have $(X,d)$ as an induced metric. Lower and upper bounds on the possible diversities that have $(X,d)$ as the induced metric are provided by the \emph{diameter diversity} and the \emph{Steiner diversity}, which we now introduce.

For any metric space $(X,d)$, the corresponding  diameter diversity $(X,\ddiam)$ is defined by
\[
\ddiam (A) = \max_{a,b \in A} d(a,b)
\] 
for all finite $A \subseteq X$.

On the other hand, given a metric space $(X,d)$, consider the weighted complete graph $(X,E,w)$ where $X$ is the set of vertices,  $E$    is the set of all unordered pairs of vertices, and $w$ assigns weight $d(a,b)$ to the edge $(a,b)$. A tree $T$ with vertices in $X$ \emph{covers} a finite set $A \subseteq X$ if $A$ is a subset of the vertices of $T$.  The Steiner diversity $(X,\dstein)$ is defined by letting $\dstein(A)$ be  the infimum, over all trees that cover $A$, of the total weight of the tree. 

The diameter diversity and the Steiner diversity of a metric space $(X,d)$ are important in that for any other diversity $(X,\delta)$ that has $(X,d)$ as an induced metric space we have
\[
\ddiam(A) \leq \delta(A) \leq \dstein(A),
\]
for all finite $A \subseteq X$ \cite{Bryant14}. 

The two diversities $\ddiam$ and $\dstein$  are determined purely by their values on pairs of points. We will show in the last section of this paper that the diversity analogue of the Urysohn metric space is neither a diameter diversity nor a Steiner diversity of any metric space. In particular, it is neither the diameter diversity nor the the Steiner diversity of the Urysohn metric space, even though it has the Urysohn metric space as its induced metric space.

\section{Analogue of Kat\v{e}tov functions} \label{s:Katetov}
For a metric space $(X,d)$, a Kat\v{e}tov function  $f \colon X \to \mathbb R$ describes a potential one-point extension of $X$ by a point $z$: a metric $\hat{d}$ on $X \cup \{ z\}$ extending $d$ is given by defining $\hat{d}(x,z) = f(x)$ for each $x \in X$.  By~\cite{katetov1986universal} we have
\begin{equation} \label{eqn:charKatetov} f \in E(X)  \LR \forall  x \forall y \,  |f(x) - f(y)| \le d(x,y) \le f(x)+ f(y).  \end{equation} 
 $E(X)$ is the set of Kat\v{e}tov functions, which form a metric space with the sup distance $d_\infty(f,g) = \sup_x |f(x) -g(x)|$. Identifying $x\in X$ with the function $y \mapsto d(x,y)$  isometrically  embeds $X$ into $E(X)$.
  
Let $(X,\delta)$ be a diversity. We will define its   extension $E(X)$ by adapting  Kat\v{e}tov's approach \cite{katetov1986universal}. 

\begin{defn} A function
$f \colon \Pf(X) \rightarrow \mathbb{R}$ is called \emph{admissible} if for some point $z$, $(X\cup\{z\}, \widehat \delta)$ is a 
diversity, where 
\[
\widehat \delta(A)=\delta(A), \ \ \ \ \widehat \delta(A \cup \{z\}) = f(A)
\]
for all finite $A \subseteq X$. The point $z$ may be in $X$.
\end{defn}

As before, each admissible function on $(X,\delta)$ corresponds to a way of extending $(X,\delta)$ by one   point $z$. 
We let $E(X)$ be the set of all admissible functions on $(X,\delta)$. We provide the analogue of \eqref{eqn:charKatetov}.
\begin{lem} \label{lemma:admissible_criteria}
A function $f \colon \Pf(X) \rightarrow \mathbb{R}$ is in $E(X)$ if and only if $f$ satisfies the following:\\
(i) $f(\emptyset)=0$,\\
(ii) $f(A) \geq \delta(A)$, for all $A$,\\
(iii) $f(A \cup C)+ \delta(B \cup C) \geq f(A \cup B)$, for all $A$, $B$, and $C$ with $C \neq \emptyset$\\
(iv) $f(A) + f(B) \geq f( A \cup B)$.
\end{lem}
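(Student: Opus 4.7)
The plan is to check each direction by translating the diversity axioms for $\widehat\delta$ on $X\cup\{z\}$ into conditions on the pair $(\delta,f)$. For $(\Rightarrow)$, assume $\widehat\delta$ is a diversity. Then (i) is immediate from $\widehat\delta(\{z\})=0$ via (D1), and (ii) follows from monotonicity of $\widehat\delta$ (a standard consequence of (D1) and (D2)) applied to $A\subseteq A\cup\{z\}$. Both (iii) and (iv) are instances of (D2) for $\widehat\delta$ with well-chosen middle sets: for (iii) apply (D2) to the triple $A\cup\{z\}$, $C$, $B$ with the nonempty $C$ in the middle, which produces exactly $f(A\cup C)+\delta(B\cup C)\geq f(A\cup B)$; for (iv) apply (D2) to $A\cup\{z\}$, $\{z\}$, $B\cup\{z\}$ with $\{z\}$ in the middle.

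For $(\Leftarrow)$, assume (i)--(iv), take a fresh point $z\notin X$, and define $\widehat\delta$ by the formulas of the definition. Verifying (D1) is routine: $\widehat\delta$ is nonnegative by (ii) and the fact that $\delta\ge 0$; the equality $\widehat\delta(\{z\})=0$ is (i); and for $|A|\ge 2$ we get $f(A)\ge\delta(A)>0$ from (ii). The edge case $f(\{x\})=0$ for some $x\in X$ corresponds to identifying $z$ with~$x$, which gives an admissible (degenerate) extension. The substantive part is verifying (D2) for $\widehat\delta$, namely $\widehat\delta(P\cup Q)+\widehat\delta(Q\cup R)\ge\widehat\delta(P\cup R)$ for all finite $P,Q,R\subseteq X\cup\{z\}$ with $Q\neq\emptyset$. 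I would do this by case analysis on which of $P,Q,R$ contain $z$: writing each set as $\overline S$ or $\overline S\cup\{z\}$ and using that $\widehat\delta(S)$ equals $\delta(\overline S)$ or $f(\overline S)$ accordingly, each instance of (D2) takes one of three shapes. The pure $\delta$ case reduces to (D2) for $\delta$; the shape $f+\delta\ge f$ (when exactly one of $P,R$ contains $z$) is precisely (iii); and the shape $f+f\ge f$ (when both or neither of $P,R$ contain $z$, with $z$ possibly also in $Q$) is obtained by applying (iii) to get $f+\delta\ge f$ and then using (ii) to upgrade the $\delta$ term to $f$. The subcases in which the relevant middle set reduces to $\{z\}$ alone are handled by (iv) in place of (iii).

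The main obstacle is the bookkeeping in the backward direction: one must check eight membership patterns for $(P,Q,R)$ relative to $z$, and recognise that no single axiom among (i)--(iv) directly gives the $f+f\ge f$ pattern. The key observation that makes the case analysis go through cleanly is that (iii) combined with (ii) suffices for that pattern whenever the common middle set $\overline Q$ is nonempty, while (iv) covers the remaining degenerate subcase where the middle collapses to $\{z\}$.
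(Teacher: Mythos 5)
Your proof is correct, and the forward direction coincides with the paper's: (i) from $\widehat\delta(\{z\})=0$, (ii) from monotonicity of $\widehat\delta$, and (iii), (iv) from the two instances of (D2) you name. In the backward direction you take a genuinely different route through the same material: you verify (D2) for $\widehat\delta$ directly, by a case analysis on which of $P,Q,R$ contain $z$, whereas the paper instead verifies two simpler properties of $\widehat\delta$ --- monotonicity and subadditivity on intersecting sets --- each by its own short case analysis, and then uses the standard fact that these two together yield (D2), via $\widehat\delta(P\cup Q)+\widehat\delta(Q\cup R)\ge\widehat\delta(P\cup Q\cup R)\ge\widehat\delta(P\cup R)$. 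The paper's factorization buys simpler individual cases: each subadditivity case is a single application of one axiom, and it never needs the combination you correctly identify as the crux of the direct approach, namely that the $f+f\ge f$ pattern follows from (iii) together with (ii) (with (iv) covering the degenerate subcase $Q=\{z\}$). Your route avoids the auxiliary characterization of (D2) but pays with more cases and that extra observation. Two small points to tidy: when $z\in Q$ but $z\notin P\cup R$ the required shape is $f+f\ge\delta$ rather than $f+f\ge f$, which follows from your pattern by one further use of (ii); and in the degenerate case $f(\{x\})=0$, where $z$ is identified with $x$, one must check that the two defining clauses for $\widehat\delta$ agree on sets containing $x$ (the paper does this via $\delta(A)\le f(A)\le\delta(A)+f(\{x\})=\delta(A)$, using (ii) and (iii)); you should include that consistency check rather than only asserting the extension is admissible.
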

\begin{proof}
\rapf Suppose $f$ is admissible, so $\widehat \delta$ is a 
diversity on $X \cup \{z\}$, and $f(A)=\widehat \delta(A \cup \{z\})$ for all $A \in \Pf(X)$. Then $\widehat{\delta}(\{z\})=0$ implies property (i). Monotonicity of $\widehat \delta(A)$ implies $f(A) = \widehat \delta(A \cup \{z\}) \geq \widehat \delta(A) = \delta(A)$, which is property (ii). The triangle inequality (D2) for $\widehat \delta$ gives, for all $C \neq \emptyset$,
\[
f(A \cup C) + \delta(B \cup C) = \widehat \delta(A \cup C \cup \{z\}) + \widehat \delta(B \cup C) \geq \widehat \delta(A \cup B \cup \{z\}) = f(A \cup B),
\]
which is property (iii).
Finally, using the triangle inequality for $\widehat{\delta}$ again gives
\[
f(A) + f(B) = \widehat{\delta}(A  \cup \{z\}) + \widehat{\delta}(B \cup \{z\}) \geq \widehat{\delta}(A \cup B \cup \{z\}) = f(A \cup B),
\]
which is property (iv).

\lapf Suppose now that  $f$ satisfies the properties (i) through (iv). If $f(\{x\})=0$ for some $x \in X$, let $z=x$. Otherwise let $z \not \in X$. Define $\widehat \delta$ on $X\cup\{z\}$ by 
\[
\widehat \delta(A) = \delta(A), \ \ \ \widehat \delta(A \cup \{z\}) = f(A) 
\]
for all finite $A \subseteq X$.  Note that if $z \in A$, we have $\delta(A) \leq  f(A) \leq \delta(A) + f(\{z\}) = \delta(A)$ using properties (ii) and (iii), so that the definition is consistent.  We need to show that $\widehat \delta$ is a 
diversity function. Since $\delta$ is a diversity, $\widehat \delta(\{x\})=0$ for all $x \in X$, and Property (i) gives that $\widehat \delta(\{z\})=0$.  For monotonicity, we need to show $\widehat \delta(A \cup \{y\}) \geq \widehat \delta(A)$
 for four different cases. First, if $z$ is not equal to $y$ and not in $A$, then it follows from monotonicity of $\delta$. Secondly, if $z=y$ and $z \not \in A$ then 
\[
\widehat \delta( A \cup \{z\}) = f(A) \geq \delta(A) = \widehat \delta(A)
\]
by property (ii). Thirdly, if $z \neq y$ and $z \in A$ then
\[
\widehat \delta(A \cup \{y\}) = f( A \setminus\{z\} \cup y) \geq f(A \setminus \{z\}) - \delta(\{y\})
= \widehat \delta(A) - 0
\]
by property (iii). Fourthly, if $z =y$ and $ z\in A$ we have $A \cup\{y\}=A$ and hence $\widehat \delta(A \cup \{y\}) = \widehat \delta(A)$.
To show that $\widehat \delta$ is subadditive on intersecting sets we again have several cases.
If $z \not \in A$ and $z\not \in B$ then 
\[
\widehat \delta(A) + \widehat \delta(B) = \delta(A) + \delta(B) \geq \delta(A \cup B) = \widehat \delta(A \cup B).
\]
If $z$ is in $A$ but not in $B$ then
\[
\widehat \delta(A) + \widehat \delta(B) = f(A \setminus \{z\}) + \delta(B) \geq f( A \setminus \{z\} \cup B) = \widehat \delta(A \cup B).
\]
using property (iii).
Likewise if $z$ is in $B$ but not in $A$.
Finally, suppose $z \in A \cap B$. Then
\[
\widehat \delta(A) + \widehat \delta(B) = f( A \setminus \{z\}) + f(B \setminus \{z\}) \geq 
f( (A \cup B) \setminus \{z\} ) = \widehat \delta(A \cup B).
\]
Hence $\widehat \delta$ is subadditive on intersecting sets. Together with monotonicity this gives the triangle inequality for diversities.
\end{proof}

Analogous to the metric $d_\infty$ in  Kat\v{e}tov's construction, we define a diversity function $\widehat{\delta}$ on $E(X)$.  
The motivating idea for our choice of function is that since every admissible function $f$ corresponds to extending a diversity by an additional point $z$, considering admissible functions $f_1, \ldots, f_k$ should require us to extend the diversity by points $z_1, \ldots, z_k$ simultaneously, giving a new diversity $\delta_E$ defined on $X \cup \{z_1, \ldots, z_k\}$. 
This diversity must  coincide with $\delta$ on $X$, and also satisfy that   $f_i(A) =\delta_E(A \cup \{z_i\})$ for $i = 1, \ldots, k$. Once we have fixed a choice of $\delta_E$ given these constraints, we let 
\[
\widehat{\delta} ( \{f_1, \ldots, f_k \}) =  \delta_E( \{z_1, \ldots, z_k \}).
\]

One choice for $\widehat \delta$ that turns out to generalize from the metric case nicely is to let  $\widehat \delta$ to be the minimum diversity satisfying the constraints
\begin{equation} \label{eq:div_conditions}
\widehat \delta(A) = \delta(A), \ \ \ \ 
\widehat \delta(A\cup \{z_i\}) = f_i(A),\ \ i = 1, \ldots, k,
\end{equation}
for all finite $A \subseteq X$. We now describe how to obtain an explicit expression for $\widehat \delta$.

We say that a collection of finite subsets $E_1, \ldots, E_k$  is \emph{connected} if, when we partition $E_1, \ldots, E_k$ into two non-empty collections of sets,  there is an $E_{i}$ on one side of the partition and an $E_j$ on the other side of the partition such that $E_i \cap E_j \neq \emptyset$. Equivalently, define a graph with $v_1, \ldots, v_k$ corresponding to $E_1, \ldots, E_k$ and there is an edge between $v_i$ and $v_j$ if and only if $E_i \cap E_j \neq \emptyset$. Then the collection of sets is \emph{connected} if and only if the graph is connected.

To determine  $\widehat \delta$, we first obtain some lower bounds on $\widehat \delta(\{z_1, \ldots, z_k \})$. Choose any $j$ from $1, \ldots, k$. For $i \neq j$ choose finite subsets $A_i$ of $X$.
The sets $A_i \cup \{z_i\}$, $i \neq j$ together with $\{z_1,\ldots, z_k\}$ form a connected cover of the set $\{z_j\} \cup \bigcup_{i \neq j} A_i$.  So by the triangle inequality for diversities, we should have that 
\[
\widehat \delta \left(\{z_j\} \cup \bigcup_{i \neq j} A_i\right) \leq \widehat \delta \left(\{z_1,\ldots, z_k\}\right) + \sum_{i \neq j} 
\widehat \delta (A_i \cup \{z_i\}).
\]
Putting this into terms of admissible functions we get
\[
f_j \left( \bigcup_{i \neq j} A_i \right) \leq \widehat \delta( \{f_1, \ldots, f_k \}) + \sum_{i \neq j} f_i (A_i).
\]
This gives the following lower bound on $\widehat \delta$:
\[
\widehat \delta( \{f_1, \ldots, f_k \}) \geq f_j( \bigcup_{i \neq j} A_i ) - \sum_{i \neq j} f_i (A_i).
\]
Now this bound must hold for each choice of $j$ and $A_i$ for $i \neq j$, which suggests the following definition of $\widehat \delta$ on $E(X)$:
\begin{equation}
\widehat \delta(\{f_1, \ldots, f_k\}) = \max_{j = 1, \ldots, k} \sup_{A_1, \ldots, A_k} \left\{
f_j( \cup_{i \neq j} A_i) - \sum_{i \neq j} f_i(A_i) 
\right\} \label{eq:hatdelta}
\end{equation} 
where all $A_i$ are finite subsets of $X$. We define $\widehat \delta(\emptyset)$ and $\widehat \delta(\{f\})$ to be zero, for all $f \in E(X)$. 
Theorem~\ref{thm:proof_of_diversity} below shows that this is a diversity on $E(X)$ which extends $(X,\delta)$ naturally. The considerations above show that it is the minimal diversity satisfying conditions \eqref{eq:div_conditions}.
Note that if  $k=2$ we simply have 
\[
\widehat \delta(\{f_1,f_2\}) = \sup_{B \mbox{ finite}} |f_1(B) - f_2(B)|.
\]

We now make some comparisons between $(E(X),\widehat \delta)$ and the tightspan diversity of $(X,\delta)$ defined in \cite{Bryant12}.
Points in $E(X)$ correspond to one-point extensions of the diversity $(X,\delta)$; points in the tightspan $T(X)$ of $X$ correspond to \emph{minimal} one-point extensions of $(X,\delta)$. Thus $T(X) \subseteq E(X)$. By Lemma 2.6 of \cite{Bryant12}, the tightspan diversity $\delta_T$  equals the restriction of $\widehat \delta$ to $T(X)$, noting that on $T(X)$  the $k$ different suprema we are taking the maxima over in \eqref{eq:hatdelta} are all identical, and hence the expression simplifies.

\begin{thm} \label{thm:proof_of_diversity}
$(E(X), \widehat \delta)$ is a diversity, and  $(X,\delta)$ is embedded in $(E(X),\widehat \delta)$ via the map $x \rightarrow \kappa_x$ where $\kappa_x(A) = \delta(A \cup \{x\})$.
\end{thm}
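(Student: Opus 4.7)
My plan is to verify axioms (D1) and (D2) directly from the formula \eqref{eq:hatdelta}, and then check the embedding by computing $\widehat\delta(\{\kappa_{x_1},\ldots,\kappa_{x_k}\})$ explicitly. The definition already gives $\widehat\delta(\emptyset) = \widehat\delta(\{f\}) = 0$. Non-negativity on larger sets follows by evaluating \eqref{eq:hatdelta} at $A_i = \emptyset$ for all $i \neq j$, which by \cref{lemma:admissible_criteria}(i) yields $f_j(\emptyset) - \sum_{i \neq j} f_i(\emptyset) = 0$, so the supremum is at least $0$. Conversely, if $\{f_1,\ldots,f_k\}$ contains distinct $f_a \neq f_b$, pick a finite $B \subseteq X$ with $f_a(B) \neq f_b(B)$, WLOG $f_a(B) > f_b(B)$, and evaluate the sup at pivot $j = a$, $A_b = B$, and $A_\ell = \emptyset$ for other $\ell$, giving $f_a(B) - f_b(B) > 0$. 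This handles (D1).

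The triangle inequality (D2) is the main obstacle. Given $\mathcal{F}, \mathcal{G}, \mathcal{H} \subseteq E(X)$ with $\mathcal{G}$ nonempty, I want $\widehat\delta(\mathcal{F}\cup\mathcal{G}) + \widehat\delta(\mathcal{G}\cup\mathcal{H}) \geq \widehat\delta(\mathcal{F}\cup\mathcal{H})$, and it suffices to bound any candidate term of the supremum defining the right-hand side. Such a term consists of a pivot $\phi \in \mathcal{F}\cup\mathcal{H}$ together with a choice of finite sets $A_\psi$ for each other $\psi$; WLOG $\phi \in \mathcal{F}$. Choose any bridge $\gamma \in \mathcal{G}$ and set $U = \bigcup_{h \in \mathcal{H}} A_h$. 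Adding and subtracting $\gamma(U)$ decomposes the candidate into
\[
\Bigl[\phi\Bigl(U \cup \!\!\bigcup_{\phi' \in \mathcal{F}\setminus\{\phi\}}\!\! A_{\phi'}\Bigr) - \!\!\sum_{\phi' \in \mathcal{F}\setminus\{\phi\}}\!\! \phi'(A_{\phi'}) - \gamma(U)\Bigr] + \Bigl[\gamma(U) - \sum_{h \in \mathcal{H}} h(A_h)\Bigr].
\]
The first bracket is one candidate in the supremum for $\widehat\delta(\mathcal{F}\cup\mathcal{G})$: pivot $\phi$, keep each $A_{\phi'}$, assign $U$ to $\gamma$, and assign $\emptyset$ to every other element of $\mathcal{G}$ (whose contribution vanishes by \cref{lemma:admissible_criteria}(i)). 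The second bracket is one candidate in the supremum for $\widehat\delta(\mathcal{G}\cup\mathcal{H})$: pivot $\gamma$, keep each $A_h$, and assign $\emptyset$ to every other element of $\mathcal{G}$. Each bracket is therefore bounded by the corresponding $\widehat\delta$, and summing gives (D2). This bridging step, which exploits that unused arguments can be set to $\emptyset$ at no cost, is the crux of the argument.

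For the embedding $x \mapsto \kappa_x$, admissibility of $\kappa_x$ follows by checking \cref{lemma:admissible_criteria}(i)--(iv): (i) from $\delta(\{x\}) = 0$, (ii) from monotonicity of $\delta$, (iii) from (D2) applied to $\delta$, and (iv) from subadditivity of $\delta$ on overlapping sets (both $A\cup\{x\}$ and $B\cup\{x\}$ contain $x$). To show the map is isometric, I compute $\widehat\delta(\{\kappa_{x_1},\ldots,\kappa_{x_k}\})$ and compare with $\delta(\{x_1,\ldots,x_k\})$. Taking $A_i = \{x_i\}$ in \eqref{eq:hatdelta} gives the lower bound, since $\kappa_{x_i}(\{x_i\}) = \delta(\{x_i\}) = 0$ and $\kappa_{x_j}(\bigcup_{i\neq j}\{x_i\}) = \delta(\{x_1,\ldots,x_k\})$. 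For the matching upper bound, the sets $\{x_1,\ldots,x_k\}$ and $A_i \cup \{x_i\}$ for $i \neq j$ form a connected cover of $\{x_j\} \cup \bigcup_{i\neq j} A_i$, joined via the common vertices $x_i$; iterating subadditivity of $\delta$ on overlapping sets together with monotonicity yields $\delta(\{x_j\} \cup \bigcup_{i\neq j} A_i) \leq \delta(\{x_1,\ldots,x_k\}) + \sum_{i\neq j}\delta(A_i\cup\{x_i\})$, which after substituting $\kappa_{x_i}(A_i) = \delta(A_i \cup \{x_i\})$ gives the required upper bound on every term of the supremum.
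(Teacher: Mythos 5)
Your proof is correct, and the first half runs parallel to the paper's: the paper's triangle-inequality argument is exactly your bridging step (take a near-optimal term of the supremum for $\widehat \delta(F \cup G)$ with pivot in $F$, add and subtract $h\left(\bigcup_k B_k\right)$ for a bridge function $h$, and recognize the two halves as candidate terms for $\widehat \delta(F \cup \{h\})$ and $\widehat \delta(G \cup \{h\})$). The only cosmetic difference there is that you prove (D2) in full generality in one pass by padding the unused elements of $\mathcal{G}$ with $\emptyset$, while the paper proves subadditivity over a single shared point and combines it with a separately proved monotonicity of $\widehat\delta$. (In both write-ups the bookkeeping tacitly treats the families as disjoint: if some $\psi \in \mathcal{F}\cap\mathcal{H}$ survives as a non-pivot, its set $A_\psi$ must be charged to exactly one of your two brackets rather than to both, and if the bridge $\gamma$ already lies in $\mathcal{F}\cup\mathcal{H}$ you need condition (iv) of \cref{lemma:admissible_criteria} to merge its two assigned sets; these are routine fixes.) The genuine divergence is in the embedding. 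The paper does not compute $\widehat\delta(\kappa(A))$ at all: it observes that $\kappa$ lands in the tight span $T(X)\subseteq E(X)$, that $\widehat\delta$ restricted to $T(X)$ is the tight-span diversity $\delta_T$, and then cites Theorem 2.8 of \cite{Bryant12} for $\delta_T(\kappa(A))=\delta(A)$. Your direct computation --- the lower bound from the choice $A_i=\{x_i\}$ using $\kappa_{x_i}(\{x_i\})=0$, and the upper bound from iterated subadditivity of $\delta$ on overlapping sets anchored at the common points $x_i$ followed by monotonicity --- is self-contained, turns the connected-cover heuristic preceding \eqref{eq:hatdelta} into an actual proof, and avoids importing tight-span theory; the paper's route is shorter on the page but rests on the external citation and on the unstated fact that each $\kappa_x$ is a minimal one-point extension. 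Your explicit verification that $\kappa_x$ satisfies (i)--(iv) of \cref{lemma:admissible_criteria} is likewise a detail the paper leaves implicit.
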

\begin{proof}
First note that by construction we get $\widehat \delta(\emptyset) = 0$ and $\widehat \delta(\{f\})=0$ for any single admissible function $f$.
If $f$ and $g$ are distinct members of $E(X)$ then $|f(B) - g(B)| > 0$  for some finite $B$, so $\widehat \delta(\{f,g\})>0$. 

To show monotonicity of $\widehat \delta$, note that restricting the size of the set of elements of $E(X)$ restricts the number of functions that can take the first position in the supremand and restricts that the corresponding $A_i$ must be the empty set. So $\widehat \delta$ can only decrease when removing elements from a set.

To show that $\widehat \delta$ satisfies the triangle inequality, let $F$ and $G$ be two finite sets of functions in $E(X)$  and let $h$ be another admissible function. Let arbitrary $\epsilon>0$ be given. By the definition of $\widehat \delta$ there is a collection of sets $A_i$ and $B_i$ as well an index $j$ such that 
\[
\widehat \delta(F \cup G) - \epsilon  \leq  f_j\left( \cup_{i\neq j} A_i \bigcup \cup_k B_k\right) - 
\sum_{i \neq j} f_i(A_i) - \sum_k g_k(B_k).
\] 
We can assume, without loss of generality, that $f_j$ belongs to one of the admissible functions in $F$. Adding and subtracting $h(\cup_k B_k)$ gives
\begin{eqnarray*}
\widehat \delta(F \cup G) - \epsilon & \leq &
f_j( \cup_{i \neq j} A_i  \bigcup \cup_{k} B_k) - \sum_{i \neq j} f_i(A_i) - h( \cup_k B_k) \\ 
 &  &  + h(\cup_k B_k) - \sum_{k} g_k(B_k)
 \\ 
 & \leq & \widehat \delta( F \cup \{h\}) + \widehat \delta( G \cup \{h\}).
\end{eqnarray*}
This is true for all $\epsilon>0$ so the triangle inequality holds.

Finally, since $\widehat \delta$ restricted to $T(X)$ is the tight span diversity $\delta_T$, and, by Theorem 2.8 of \cite{Bryant12},  $\delta_T(\kappa(A)) = \delta(A)$ for all $A \in \Pf(X)$. We conclude that $\widehat \delta(\kappa(A)) = \delta(A)$ for all $A \in \Pf(X)$.

%
\end{proof}

\section{Extensions, supports, and  $E(X,\omega)$}

Recall that a diversity $(X,\delta)$ is separable if the underlying induced metric is separable. Analogous to the metric case, the diversity $(E(X),\widehat \delta)$ need not be separable, even when $(X,\delta)$ is. To get a separable but sufficiently rich subspace of $E(X)$ we develop the concepts of support for admissible functions of diversities.

\begin{defn}
Let $(X,\delta)$ be a diversity, let $S \subseteq X$, and let $f \in E(S)$. We define the \emph{extension of $f$ to $X$} as
\begin{equation}
f_S^X(A) = \inf \left\{  f(B) + \sum_{b\in B}  \delta(A_b \cup \{b\})  \colon \mbox{ finite }B \subseteq S, \bigcup_{b \in B} A_b = A \right\}. \label{eq:fSX}
\end{equation}
for finite $A \subseteq X$. 
\end{defn}

The definition of $f^X_S$ can be viewed as a  \emph{one-point amalgamation}. Amalgamation is a concept from algebra that also occurs in model theory. Two structures that share a common substructure are simultaneously embedded into a larger structure. Here the two structures are   diversities. One is $(X,\delta)$, and the other is the diversity on $S \cup \{z\}$ corresponding to  the function $f$, where $z$ is a single point that may or may not be in $S$. Since $S \subseteq X$, the two diversities overlap (have a common substructure) on $S$.  In Lemma~\ref{lemma:extend_from_support} below, we show that  $f^X_S$ is an admissible function, and hence it corresponds to a diversity on $X \cup \{z\}$ that extends both $(X,\delta)$ and the diversity on $S \cup \{z\}$ corresponding to $f$. Furthermore, it is the maximal such extension.
 
\begin{defn}
Let $g$ be an admissible function on $(X,\delta)$ and $S \subseteq X$ be nonempty.
If $g=f_S^X$ for some $f \in E(S)$ we say that $g$ \emph{has support} $S$.
We say that  $f$ is \emph{finitely supported} if it has some finite support $S$.
\end{defn}

In the following we use $g \upharpoonright S$ to denote the restriction of $g$ to $S$.

\begin{lem} \label{lemma:extend_from_support}
Let $(X,\delta)$ be a diversity, let $S \subseteq X$, and let $f \in E(S)$. Then $f_S^X$ is an admissible function on $X$ such that $f_S^X(A) = f(A)$ for all finite $A \subseteq S$. Furthermore, it is the unique maximal such extension, in that for any admissible function $g$ such that $g \upharpoonright S =f$, we have $g(A)\leq f_S^X (A)$ for any finite set $A \subseteq X$.
\end{lem}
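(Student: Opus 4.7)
The proof has three components: the restriction property $f_S^X \upharpoonright \Pf(S) = f$, admissibility of $f_S^X$ on $X$, and maximality among all admissible extensions. The unifying tool throughout is the following connected-cover observation. If $B \subseteq S$ is a nonempty finite set and $\{A_b\}_{b \in B}$ is a family with $\bigcup_b A_b = A$, then the collection $\{B\} \cup \{A_b \cup \{b\} : b \in B\}$ is connected, since $B$ meets each $A_b \cup \{b\}$ at $b$, and its union is $A \cup B$. Iterating subadditivity on overlapping sets therefore gives
\[
\delta(A \cup B) \;\leq\; \delta(B) + \sum_{b \in B} \delta(A_b \cup \{b\}),
\]
and the same estimate holds inside any one-point extension of $\delta$, with $B$ replaced by $B \cup \{z\}$ where appropriate.

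For the restriction property, $f_S^X(A) \leq f(A)$ follows at once by taking the trivial decomposition $B = A$, $A_b = \{b\}$, since $\delta(\{b\}) = 0$. For the reverse direction I would invoke the one-point extension $\hat{\delta}_f$ on $S \cup \{z\}$ guaranteed by $f \in E(S)$ and apply the connected-cover trick with $B \cup \{z\}$ in place of $B$ to obtain $f(A) = \hat{\delta}_f(A \cup \{z\}) \leq \hat{\delta}_f(A \cup B \cup \{z\}) \leq f(B) + \sum_b \delta(A_b \cup \{b\})$; taking the infimum gives $f(A) \leq f_S^X(A)$.

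For admissibility I would verify the four conditions of \cref{lemma:admissible_criteria}. Condition (i) is immediate from $B = \emptyset$. Condition (ii) reduces to the displayed inequality above together with $f(B) \geq \delta(B)$ and monotonicity of $\delta$. For (iv) I would combine two near-optimal decompositions of $A$ and $B$ by taking the union of their index sets in $S$ and uniting the corresponding $A_b$'s, then use subadditivity (iv) for $f$ together with subadditivity of $\delta$ on sets sharing $b$. The main obstacle is condition (iii), namely $f_S^X(A \cup C) + \delta(B \cup C) \geq f_S^X(A \cup B)$ for $C \neq \emptyset$. My plan is to take a near-optimal decomposition $B'$, $\{A'_b\}$ of $A \cup C$, pick $b^* \in B'$ with $A'_{b^*} \cap C \neq \emptyset$ (such $b^*$ exists because $\bigcup_b A'_b \supseteq C$), and form a candidate decomposition of $A \cup B$ by setting $A''_b = A'_b \cap A$ for $b \neq b^*$ and $A''_{b^*} = (A'_{b^*} \cap A) \cup B$. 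The key payoff is a single application of axiom (D2) with the nonempty middle set $A'_{b^*} \cap C$, yielding $\delta((A'_{b^*} \cap A) \cup B \cup \{b^*\}) \leq \delta(A'_{b^*} \cup \{b^*\}) + \delta(B \cup C)$; the remaining terms only decrease under the substitution by monotonicity of $\delta$, and the term $\delta(B \cup C)$ is absorbed exactly once.

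Finally, for maximality let $g$ be any admissible function on $X$ with $g \upharpoonright \Pf(S) = f$, and let $\hat{\delta}_g$ denote its one-point extension on $X \cup \{z\}$. Applying the connected-cover trick inside $\hat{\delta}_g$ to any candidate pair $B \subseteq S$, $\{A_b\}$ with $\bigcup_b A_b = A$ yields
\[
g(A) = \hat{\delta}_g(A \cup \{z\}) \leq \hat{\delta}_g(A \cup B \cup \{z\}) \leq g(B) + \sum_b \delta(A_b \cup \{b\}) = f(B) + \sum_b \delta(A_b \cup \{b\}),
\]
and taking the infimum gives $g(A) \leq f_S^X(A)$; uniqueness of the maximum among admissible extensions is then automatic. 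The technical heart of the argument is the careful choice of $b^*$ and the (D2) estimate in condition (iii); everything else is bookkeeping around the same connected-cover inequality.
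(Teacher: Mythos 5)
Your proof is correct and follows essentially the same route as the paper: you verify conditions (i)--(iv) of Lemma~\ref{lemma:admissible_criteria}, use the same trivial decomposition for $f_S^X(A)\le f(A)$ and the same repeated-triangle-inequality (``connected cover'') bound for the reverse inequality, and your handling of condition (iii) --- intersecting the blocks with $A$ and adjoining $B$ to the block meeting $C$ --- is only a cosmetic variant of the paper's step, which enlarges that block to cover $A\cup B\cup C$ and then invokes monotonicity of $f_S^X$. The one substantive difference is in your favour: you spell out the maximality claim $g\le f_S^X$ for an arbitrary admissible extension $g$, which the paper's proof leaves implicit (it only records the analogous computation, via repeated use of condition (iii), in the remark following the lemma).
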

\begin{proof}
We first show that $f_S^X$ is an admissible function on $X$ by checking conditions (i) through (iv). (i) follows
from the non-negativity of $f$ and $\delta$ and setting $B$ to be the empty set. 
To show (ii) we  use property (ii) for $f$ to see that the expression inside the infimum for $f_S^X(A)$ satisfies
 \[
 f(B) +  \sum_{b\in B} \delta( A_b \cup \{b\} )
 \geq  \delta(B) +  \sum_{i=1}^k \delta( A_b \cup \{b\} ) 
 \geq  \delta(\bigcup_{b\in B} A_b) = \delta(A),
\]
where we have used condition (ii) for $f$ and then the triangle inequality for diversities. 
For condition (iii), let $C$ be an arbitrary nonempty set.  Then
\begin{eqnarray*}
f_S^X(A \cup C) + \delta(B \cup C) 
&= & 
\inf_{D \subseteq S} \,  \inf_{\cup A_d = A \cup C } \left\{ f(D) + \sum_{d \in D} \delta( A_d \cup \{d\} ) \right\} + \delta( B \cup C).
\end{eqnarray*} 
For each such choice of $\{A_d\}, d \in D$,
let $e$ be an element of $D$ such that $A_e$ and $C$ intersect. Then from the triangle inequality $\delta(A_e \cup \{e\}) + \delta( B \cup C) \geq \delta(A_e \cup B \cup C \cup \{e\})$.  So
\[
f_S^X(A \cup C) + \delta(B \cup C) 
 \geq   
\inf_{D \subseteq S} \,  \inf_{\cup A_d = A \cup C } \left\{ f(D) + \sum_{d \neq e} \delta( A_d \cup \{d\} )+ \delta( A_e \cup  B \cup C \cup \{e\})  \right\} 
\] 
Now the union of the sets $A_d$ for $d \neq e$ together with $A_e \cup B \cup C$ is $A \cup B \cup C$. So from the definition of $f_S^X(A \cup B \cup C)$ we get 
\[
f_S^X(A \cup C) + \delta(B \cup C) \geq f_S^X(A \cup B \cup C) \geq f_S^X(A \cup B)
\]
the last step following from monotonicity of $f_S^X$.

For condition (iv) note that for all finite $A, B \subseteq X$
\begin{eqnarray*}
f_S^X(A \cup B) & = &  \inf_{D \subseteq S} \  \inf_{ \bigcup_{d\in D} G_d = A \cup B} \left\{ f(D) +  \sum_{d\in D} \delta( G_d \cup \{d\})  \right\}  \\
& \leq & \inf_{E, F \subseteq S} \ \ \inf_{\cup_{e \in E} A_e = A} \ \ \inf_{\cup_{f \in F} B_f = B}  \left\{ f(E \cup F) +  \sum_{e\in E } \delta( A_e \cup \{e\}) + \sum_{f \in F} \delta( B_f \cup \{f\})  \right\} 
\end{eqnarray*}
where we have used the fact that the infimum increases because we restricted it to the case when $D$ is a union of two sets, one of which indexes a cover of $A$ and the other indexes a cover of $B$ (and we've allowed some double counting of indices).
Now since $f(E \cup F) \leq f(E) + f(F)$, we can decompose the infimum to get 
$f_S^X( A \cup B)  \leq  f_S^X(A) + f_S^X(B)$, as required.

Next we show that $f_S^X$ is an extension of $f$ in that $f_S^X(A) = f(A)$ for all finite $A \subseteq S$. First note that taking $B=A$ and $A_b =\{b\}$ for all $b \in B$ in the definition of $f_S^X$ gives that $f_S^X(A) \leq f(A)$. Secondly, if we use condition (iii) of admissible functions repeatedly in the expression in the infimum we get $f_S^X(A) \geq f(A)$, giving the result.

To show that $f_S^X$ has $S$ as a support, just replace the $f$ with $f_S^X$ in the definition of $f_S^X$ and see that it does not change the result, which you can do since $f$ and $f_S^X$ agree on all subsets of $S$.
\end{proof}

Let $f$ be any admissible function on $(X,\delta)$ and let $S=X$.
Repeated use of property (iii) of admissible functions shows 
\[ f(B) + \sum_{b\in B}  \delta(A_b \cup \{b\})  \geq f(A)\]
in equation \eqref{eq:fSX}, so equality holds for all $A$. Hence all admissible functions on $(X,\delta)$ have $X$ as a support.

We define
\[
E(X,\omega)= \{ f \in E(X) \colon f \mbox{ is finitely supported} \}
\]
Note that  $E(X,\omega)$ is a subspace of $E(X)$, and that  $\kappa_x$ is finitely supported for each $x \in X$ since it has support $\{x\}$. So $E(X,\omega)$ with diversity $\widehat \delta$ is still an extension of the given  diversity $(X,\delta)$.

We now show that $(E(X,\omega),\widehat \delta)$ is separable whenever $(X,\delta)$ is.  Recall that separability of a diversity just means separability of the induced metric space.


\begin{lem} \label{lem:finite_gives_separable}
Let   $(X,\delta)$ be a diversity with $|X|=n < \infty$. Then $E(X)=E(X,\omega)$ is homeomorphic to a closed subspace of $\mathbb{R}^{\Pf(X)}$.
\end{lem}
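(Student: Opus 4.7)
The plan is to exhibit the natural coordinate embedding $\iota \colon E(X) \to \R^{\Pf(X)}$ sending $f$ to the tuple $(f(A))_{A \in \Pf(X)}$ and to show it is a homeomorphism onto a closed subset. Because $X$ is finite, $\Pf(X)$ is finite and $\R^{\Pf(X)}$ is simply a finite-dimensional Euclidean space, which removes any topological pathology at the outset.

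First I would dispense with the equality $E(X) = E(X,\omega)$. By the remark immediately following \cref{lemma:extend_from_support}, every admissible function on $(X,\delta)$ has $X$ itself as a support; since $X$ is finite by hypothesis, every $f \in E(X)$ is finitely supported, so the two sets coincide.

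Next I would verify that $\iota$ is a topological embedding. From the $k=2$ case of \cref{eq:hatdelta}, explicitly recorded in the paper, the induced metric on $E(X)$ is
\[
\widehat\delta(\{f,g\}) \;=\; \sup_{B \in \Pf(X)} |f(B) - g(B)|.
\]
Because $\Pf(X)$ is finite this supremum is attained and coincides with the $\ell^\infty$-metric on $\R^{\Pf(X)}$ restricted to the image of $\iota$. On a finite-dimensional vector space the $\ell^\infty$-topology agrees with the usual Euclidean topology, so $\iota$ carries the induced metric topology on $E(X)$ isometrically onto the subspace topology on $\iota(E(X))$.

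Finally I would argue that $\iota(E(X))$ is closed in $\R^{\Pf(X)}$. By \cref{lemma:admissible_criteria}, a tuple $(f(A))_{A \in \Pf(X)}$ lies in $\iota(E(X))$ exactly when it satisfies the four conditions (i)--(iv). For each fixed choice of finite $A,B,C \subseteq X$, every such condition is a weak linear (in)equality in the coordinates $f(A)$ and so defines a closed half-space or hyperplane in $\R^{\Pf(X)}$. Since $X$ is finite there are only finitely many such conditions, and the intersection of finitely many closed sets is closed. The only mild subtlety is the identification of the induced metric with the $\ell^\infty$-metric on coordinate vectors, but this falls straight out of the formula already derived for $\widehat\delta$ on pairs; beyond that, the proof is routine bookkeeping.
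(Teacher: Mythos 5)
Your proof is correct and follows essentially the same route as the paper: identify $E(X)$ with coordinate tuples in $\mathbb{R}^{\Pf(X)}$, observe that the admissibility conditions of \cref{lemma:admissible_criteria} are finitely many weak linear constraints and hence cut out a closed set, and note that the induced metric $\widehat\delta(\{f,g\})=\sup_B|f(B)-g(B)|$ is the $\ell^\infty$-metric, which is topologically equivalent to the Euclidean one in finite dimensions. Your explicit justification of $E(X)=E(X,\omega)$ via the remark that $X$ is always a support is a small but welcome addition that the paper leaves implicit.
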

\begin{proof}
Every function $f \in E(X)$ can be naturally identified as an element of $\mathbb{R}^{\Pf(X)}$. $E(X)$ corresponds to those elements of $\mathbb{R}^{\Pf(X)}$ with the element satisfying the conditions of an admissible function. Since these conditions consist of a linear equality and some non-strict linear inequalities, the subset of $E(X)$ is closed in $\mathbb{R}^{\Pf(X)}$. We just need to show that the metric induced by $\widehat \delta$ is homeomorphic to the Euclidean metric.

Since $
\widehat \delta(\{f,g\}) = \sup_{B \in \Pf(X)} |f(B) - g(B)|$, 
which is the $\ell_\infty$ norm, this  gives the same topology as the Euclidean norm in 
$\mathbb{R}^{\Pf(X)}$.
\end{proof}

\begin{lem} \label{lemma:admissible_continuity}
Let $f$ be an admissible function on the diversity $(X,\delta)$. Let $A= \{ a_1, \ldots, a_n\}$ and $B= \{ b_1, \ldots, b_n\}$ be subsets of $X$, where $\delta( \{a_i, b_i\} ) \leq \epsilon$ for $i =1, \ldots, n$. Then
\[
| f(A) - f(B) |  \leq n \epsilon.
\]
\end{lem}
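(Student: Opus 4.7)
The plan is to pass to the extension diversity that realises $f$. Since $f$ is admissible, \cref{lemma:admissible_criteria} (or rather its proof) gives a diversity $\widehat\delta$ on $X\cup\{z\}$ with $\widehat\delta(F)=\delta(F)$ for $F\subseteq X$ and $\widehat\delta(F\cup\{z\})=f(F)$. The claim then becomes
\[
\bigl|\widehat\delta(A\cup\{z\})-\widehat\delta(B\cup\{z\})\bigr|\le n\epsilon,
\]
which I will prove by swapping one $a_i$ for $b_i$ at a time and applying the triangle inequality (D2) at each swap. By symmetry between $A$ and $B$, it suffices to establish $\widehat\delta(A\cup\{z\})\le\widehat\delta(B\cup\{z\})+n\epsilon$.

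For $0\le i\le n$ set
\[
S_i=\{b_1,\ldots,b_i,a_{i+1},\ldots,a_n,z\},
\]
so that $S_0=A\cup\{z\}$ and $S_n=B\cup\{z\}$. The key step is to show $\widehat\delta(S_{i-1})\le\widehat\delta(S_i)+\epsilon$ for each $i=1,\ldots,n$. I apply (D2) with the three pieces $A'=\{a_i\}$, $B'=\{b_i\}$, and $C'=S_{i-1}\setminus\{a_i\}$; the set $B'$ is nonempty as required, and $C'$ contains $z$ so $S_i=B'\cup C'$. Then (D2) gives
\[
\widehat\delta(\{a_i,b_i\})+\widehat\delta(S_i)\ge\widehat\delta(S_{i-1}),
\]
and since $\widehat\delta(\{a_i,b_i\})=\delta(\{a_i,b_i\})\le\epsilon$, the desired one-step bound follows. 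Telescoping yields $\widehat\delta(S_0)\le\widehat\delta(S_n)+n\epsilon$, that is $f(A)\le f(B)+n\epsilon$, and the reverse inequality follows by interchanging the roles of $A$ and $B$.

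There is no real obstacle: the only subtlety is making sure $C'$ is nonempty at each step, which is automatic because $z\in C'$, and that minor repetitions among the $a_i$'s or $b_i$'s collapse harmlessly inside the set notation. If one prefers to avoid invoking $\widehat\delta$, the same argument can be phrased directly in terms of $f$ by applying property (iii) of \cref{lemma:admissible_criteria} at each swap, with the single point $z$ playing no explicit role.
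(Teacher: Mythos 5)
Your proof is correct and is essentially the paper's argument: the paper also swaps one $a_i$ for $b_i$ at a time and telescopes, paying $\delta(\{a_i,b_i\})\le\epsilon$ per swap, only it applies property (iii) of \cref{lemma:admissible_criteria} directly to $f$ rather than passing through the realized extension $\widehat\delta$ and (D2) --- the reformulation you yourself note in your last sentence. The only cosmetic point is that with your choice $C'=S_{i-1}\setminus\{a_i\}$ one has $B'\cup C'\subseteq S_i$ (possibly properly, if indices collide), so the one-step bound formally also uses monotonicity of $\widehat\delta$; this is harmless, as you observe.
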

\begin{proof}
Using property (iii) of admissible functions
\begin{eqnarray*}
f(A) & = & f( \{ a_1, \ldots, a_n \}) \\
      & \leq & f( \{ b_1, a_2, \ldots, a_n \}) + \delta(\{a_1,b_1\}) \\
      & \leq & \cdots \\
      & \leq & f( \{ b_1, \ldots, b_n \}) + \sum_{i=1}^n \delta(\{a_i,b_i\}) \\
      & = & f(B) + n \epsilon.
\end{eqnarray*}
Applying the same argument with $B$ and $A$ reversed gives $f(B) \leq f(A) + n \epsilon$.  \end{proof}

\begin{thm}
Let $(X,\delta)$ be a separable diversity. Then $(E(X,\omega),\widehat \delta)$ is a separable diversity.
\end{thm}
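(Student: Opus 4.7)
The plan is to build a countable dense subset of $E(X,\omega)$ from a countable dense subset of $X$, using the finitely supported structure of its elements together with the continuity properties of admissible functions.

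Fix a countable dense subset $D$ of $(X,d)$. For each finite $S \in \Pf(D)$, the space $(E(S),\widehat{\delta})$ is separable by \cref{lem:finite_gives_separable} (since a finite-dimensional closed subset of $\mathbb{R}^{\Pf(S)}$ is separable); let $D_S \subseteq E(S)$ be a countable dense subset. Define
\[
\mathcal{F} = \bigcup_{S \in \Pf(D)} \bigl\{ f_S^X : f \in D_S \bigr\},
\]
which is a countable union of countable sets, hence countable. I claim $\mathcal{F}$ is dense in $(E(X,\omega),\widehat{\delta})$.

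To prove density, fix $g \in E(X,\omega)$ with finite support $T = \{t_1,\ldots,t_n\}$, so $g = h_T^X$ with $h = g\!\upharpoonright\! T \in E(T)$, and fix $\varepsilon > 0$. Choose $s_1,\ldots,s_n \in D$ with $d(s_i,t_i) < \eta$, where $\eta$ will be specified, and $\eta$ small enough that the $s_i$ are distinct; set $S = \{s_1,\ldots,s_n\}$. The key intermediate step is to show that $(g\!\upharpoonright\! S)_S^X$ is close to $g$ uniformly on $\Pf(X)$. For any finite $A \subseteq X$ and any near-optimal decomposition $g(A) \approx h(B) + \sum_{b \in B} \delta(A_b \cup \{b\})$ with $B \subseteq T$ and $\bigcup A_b = A$, I relabel $t_i \mapsto s_i$ to obtain a cover with $C \subseteq S$: by \cref{lemma:admissible_continuity} applied to the admissible $g$, $|g(B) - g(C)| \leq n\eta$, and by \cref{prop:1-Lipschitz}, $|\delta(A_b \cup \{t_i\}) - \delta(A_b \cup \{s_i\})| \leq \eta$ for each of the at most $n$ terms. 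Combining gives $(g\!\upharpoonright\! S)_S^X(A) \leq g(A) + 2n\eta$, and the symmetric argument yields the reverse inequality, so
\[
\sup_{A \in \Pf(X)} \bigl| g(A) - (g\!\upharpoonright\! S)_S^X(A) \bigr| \leq 2n\eta.
\]

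Finally, choose $f \in D_S$ with $\sup_{B \in \Pf(S)} |f(B) - g\!\upharpoonright\! S (B)| < \varepsilon/2$; since the infima in \cref{eq:fSX} defining $f_S^X$ and $(g\!\upharpoonright\! S)_S^X$ differ only in the leading term and share the same cover structure, this bound transfers directly: $\sup_A |f_S^X(A) - (g\!\upharpoonright\! S)_S^X(A)| \leq \varepsilon/2$. Taking $\eta < \varepsilon/(4n)$ and combining the two estimates gives $\widehat{\delta}(\{g, f_S^X\}) = \sup_A |g(A) - f_S^X(A)| < \varepsilon$, with $f_S^X \in \mathcal{F}$, establishing density. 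The main delicate point is ensuring the approximation bound $(g\!\upharpoonright\! S)_S^X \approx g$ is uniform over all finite $A \subseteq X$ rather than only over small or special $A$; this is exactly what the Lipschitz lemmas deliver, because the cover $B$ in the infimum has cardinality at most $n$ independent of $A$.
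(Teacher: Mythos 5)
Your proof is correct and follows essentially the same route as the paper: reduce to finitely supported functions over a countable dense set $D \subseteq X$, use \cref{lem:finite_gives_separable} to get a countable dense set of extensions for each finite support in $D$, and use \cref{lemma:admissible_continuity} together with \cref{prop:1-Lipschitz} to move a support $T \subseteq X$ to a nearby $S \subseteq D$ with uniform control of $f^X_S$ over all finite $A \subseteq X$. The only organizational difference is that you build a countable dense subset of $E(X,\omega)$ directly, whereas the paper first forms $E(D,\omega)$ and verifies that $f \mapsto f_D^X$ is a dense embedding preserving $\widehat\delta$ on all finite sets of functions; since separability only concerns the induced metric, your shortcut suffices.
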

\begin{proof}
Let $D$ be a countable dense set in $(X,\delta)$. We will show that  $(E(D,\omega),\widehat \delta_{D})$ is separable, and that $(E(D,\omega),\widehat \delta_{D})$ can be densely embedded in $(E(X,\omega),\widehat \delta_{X})$.

To show that $(E(D,\omega),\widehat \delta_{D})$ is separable, note that it is the union, over all finite subsets $S \subseteq D$, of the extensions of $(D,\delta)$ with support $S$. Since each set of extensions is separable (being homeomorphic to a closed subset of a finite-dimensional Euclidean space by Lemma~\ref{lem:finite_gives_separable}), and there are only countably many of them, $(E(D,\omega),\widehat \delta_{D})$ is separable.  

To show that $(E(D,\omega),\widehat \delta_{D})$ is densely embeddable in $(E(X,\omega),\widehat \delta_{X})$,  we define the embedding $\gamma$. For $f \in E(D,\omega)$ we will define $\gamma f= \hat{f} \colon \Pf (X) \rightarrow \mathbb{R}$ via $\hat{f} = f_D^X$.
From Lemma \ref{lemma:extend_from_support} we have that $\hat{f}$ is an admissible function on $X$, $\hat{f}$ is an extension of $f$, and $D$ is a support of $\hat{f}$.

Next we need to show that for any finite set $F$ of admissible functions on $D$
\[
\widehat \delta_{X} ( \gamma F ) = \widehat \delta_{X} (F).
\]
First note that 
\begin{eqnarray*}
\widehat \delta_{X}(\gamma F) & = & \max_j \sup_{A_1, \ldots, A_k \subseteq X} \left\{ 
\gamma f_j( \cup_{i \neq j} A_i ) - \sum_{i \neq j} \gamma f_i(A_i) \right\} \\
& \geq & \max_j \sup_{A_1, \ldots, A_k \subseteq D} \left\{ 
\gamma f_j( \cup_{i \neq j} A_i ) - \sum_{i \neq j} \gamma f_i(A_i) \right\} \\
& = &  \max_j \sup_{A_1, \ldots, A_k \subseteq D} \left\{  f_j( \cup_{i \neq j} A_i ) - \sum_{i \neq j}  f_i(A_i) \right\} \\
& = & \widehat \delta_{D} (F),
\end{eqnarray*}
where we have used that $D$ is a subset of $X$ and that $\gamma f$ agrees with  $f$ on 
$D$. To show conversely that $\widehat \delta_{X}(\gamma F) \leq \widehat \delta_{D}(F)$, we need to show that for any choice of $j$ and finite $A_1, \ldots, A_k \subseteq X$, we can find finite  $B_1, \ldots, B_k$ so that $\gamma f_j( \cup_{i \neq j} B_i )$ is arbitrarily close to $\gamma f_j( \cup_{i \neq j} A_i)$ and $\gamma f_i(B_i)$ is arbitrarily close to $\gamma f_i(A_i)$ for all $i \neq j$. That such $B_i$ exist follows from the density of $D$ in $X$ and Lemma~\ref{lemma:admissible_continuity}.

We have shown that the map $\gamma \colon E(D,\omega) \rightarrow E(X,\omega)$ is an embedding. We still need to show that it is a dense embedding. Let $f \in E(X,\omega)$. Suppose $f$ has finite support $S$, with $|S|=n$ and elements $s_1, \ldots, s_n$. For any $\epsilon>0$, find a $T \subseteq D$ with $|T|=n$  elements $t_1, \ldots, t_n$ such that for any subindices $i_1,\ldots, i_m$ of $1,\ldots, n$ we have
\[
|f(\{t_{i_1}, \ldots, t_{i_m}\}) - f(\{s_{i_1}, \ldots, s_{i_m} \}) | < \epsilon.
\]
(This is possible by Lemma~\ref{lemma:admissible_continuity}.)
Now $f$ restricted to $T$ is still an admissible function. We want to extend it to all of $D$. For any finite subset $A$ of $D$, define $g = (f \upharpoonright T)_T^D$.
By Lemma~\ref{lemma:extend_from_support}, $g$ is an admissible function on $(D,\delta)$, it is an extension of $f \upharpoonright T$, and it has support $T$. Now we let $\hat{g}=\gamma g$ be the image of $g$ under the embedding. We need to show that $\hat{g}$ is close to $f$.

The functions $\hat{g},f \colon \Pf(X) \rightarrow \mathbb{R}$ agree on subsets of $T$, but $\hat{g}$ is supported on $T$ and $f$ is supported on $S$. Let $A$ be an arbitrary finite subset of $X$.
Since $T$ is finite, we have for some $B \subseteq T$, $B=\{t_{i_1},\ldots, t_{i_m}\}$  and $\{A\}_{b \in B}$ with $\cup_{b \in B} A_b = A$
\begin{eqnarray*}
\hat{g}(A) &  \geq & f(B) + \sum_{b \in B} \delta( A_b \cup \{b\}) - \epsilon \\
 & \geq  & f(C)-\epsilon + \sum_{c \in C} \delta(A_c \cup \{c\})  -\epsilon \\
 & \geq & f(A) - 2 \epsilon
\end{eqnarray*}
where $C \subset S$ and $C = \{ s_{i_1}, \ldots, s_{i_m} \}$.  A similar argument starting with $f(A)$ gives $f(A) \leq \hat{g}(A) - 2 \epsilon$. Together we have $|\hat{g}(A) - f(A) | \leq 2\epsilon$ for all finite $A \subseteq X$ and so $\widehat \delta_X ( \{ \hat{g} , f\}) \leq 2 \epsilon$ can be made arbitrarily small
as required.
\end{proof}

\section{Construction of the diversity analogue of the Urysohn metric space.}

Here we define the diversity analogue of the Urysohn metric space. We show that it is the unique universal Polish diversity. We also show that it is ultrahomogeneous. 

In what follows we will need the following lemma.
For each $k\ge 1$, let $\delta_k$ be  the function  that sends $(a_1, \ldots, a_k)$ to $\delta(\{ a_1, \ldots, a_k\})$. 

\begin{prop} \label{prop:1-Lipschitz} Let $(X, \delta)$ be a diversity. For each $k$, the function $\delta_k$ is 1-Lipschitz in each argument. \end{prop}
\begin{proof}
Consider varying the $i$th argument of $\delta_k$ from $x_i$ to $x_i'$. We know from the triangle inequality that 
\begin{eqnarray*}
\delta_k(x_1, \ldots, x_i, \ldots, x_k) & = & \delta(\{x_1, \ldots, x_i, \ldots, x_k\}) \\
 & \leq & \delta(\{x_1, \ldots, x_i', \ldots, x_k\}) + \delta(\{ x_i, x_i'\}) \\
 & = & \delta_k(x_1, \ldots, x_i', \ldots, x_k) + d(x_i,x_i').
\end{eqnarray*}
Similarly, $\delta_k(x_1, \ldots, x_i', \ldots, x_k) \leq \delta_k(x_1, \ldots, x_i, \ldots, x_k) +
d(x_i,x_i')$. So 
\[
|  \delta_k(x_1, \ldots, x_i, \ldots, x_k) - \delta_k(x_1, \ldots, x_i', \ldots, x_k)| \leq d(x_i, x_i')
\]
as required.
\end{proof}

\begin{defn}
A diversity $(X,\delta)$ has the extension property if for any finite subset $F$ of $X$ and any admissible function $f$ on $F$, there is $x \in X$ such that $f(A) = \delta(A \cup\{x\})$ for any finite $A \subseteq F$.
\end{defn}

The extension property on metric spaces is also known as the Urysohn property \cite{Gao2009}.

\begin{defn}
We say a diversity is Polish if its induced metric space is Polish, i.e. it is separable and complete.
\end{defn}

\begin{lem} \label{lem:cont_extense}
Let $(X,\delta_X)$ and $(Y,\delta_Y)$ be diversities where $X$ is separable with a dense subset $D_X$ and $Y$ is complete. Any isomorphism from $D_X$ into $Y$ can be extended to an isomorphism from $X$ into $Y$. 
\end{lem}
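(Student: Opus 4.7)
The plan is to extend $\phi \colon D_X \to Y$ pointwise by continuity. For each $x \in X$, choose a sequence $(d_n) \subseteq D_X$ with $d_n \to x$ in the induced metric $d_X$. Because $\phi$ is a diversity isomorphism on $D_X$, it is in particular isometric for the induced metrics, so $(\phi(d_n))$ is Cauchy in $Y$. Completeness of $Y$ then lets us define $\widetilde{\phi}(x) := \lim_n \phi(d_n)$.

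I would then verify well-definedness in the standard way: given two sequences $(d_n'), (d_n'') \subseteq D_X$ converging to $x$, interleave them into a single sequence still converging to $x$; its $\phi$-image is Cauchy and therefore has a unique limit, forcing the two candidate values of $\widetilde{\phi}(x)$ to agree. For $x \in D_X$ the constant sequence shows $\widetilde{\phi}$ agrees with $\phi$, so $\widetilde{\phi}$ genuinely extends $\phi$.

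The main content is showing $\widetilde{\phi}$ preserves the diversity function. Fix a finite $A = \{x_1,\dots,x_k\} \subseteq X$ and sequences $d_n^{(i)} \to x_i$ in $D_X$. Applying Proposition \ref{prop:1-Lipschitz} coordinate by coordinate gives
\[
\bigl|\delta_X(\{d_n^{(1)},\dots,d_n^{(k)}\}) - \delta_X(\{x_1,\dots,x_k\})\bigr| \;\leq\; \sum_{i=1}^k d_X(d_n^{(i)},x_i) \;\longrightarrow\; 0,
\]
and the analogous bound in $Y$ gives $\delta_Y(\{\phi(d_n^{(1)}),\dots,\phi(d_n^{(k)})\}) \to \delta_Y(\{\widetilde{\phi}(x_1),\dots,\widetilde{\phi}(x_k)\})$. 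Since $\phi$ preserves diversity on $D_X$, the two prelimit sequences are equal term-by-term, and passing to the limit yields $\delta_X(A) = \delta_Y(\widetilde{\phi}(A))$. Injectivity of $\widetilde{\phi}$ follows from the $k=2$ case, which is just the isometric extension of the induced metric.

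There is no real obstacle here; the proof is a routine extension-by-continuity argument, and the only place where the diversity structure (rather than just the induced metric) needs care is the passage to the limit for $k$-ary $\delta$, which is handled cleanly by the coordinatewise 1-Lipschitz property established in Proposition \ref{prop:1-Lipschitz}.
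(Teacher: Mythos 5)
Your proposal is correct and follows essentially the same route as the paper: extend $\phi$ by uniform continuity (using that it is isometric for the induced metrics and that $Y$ is complete), then verify preservation of $\delta$ on finite sets by passing to the limit via the coordinatewise $1$-Lipschitz property of Proposition~\ref{prop:1-Lipschitz}. You merely spell out the well-definedness of the continuous extension in more detail than the paper does.
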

\begin{proof}
Let $\phi$ be an isomorphism from $D_X$ into $Y$.
Since $\phi$ preserves the diversity  it also preserves the induced metrics between the two sets and is hence a uniformly continuous map. This means we can extend it to a uniformly continuous function $\bar{\phi}$ between $X$ and $Y$. To show that $\bar{\phi}$ is an isomorphism, let $A \subset X$ be an arbitrary finite set, with $A= \{ a_1, \ldots, a_n \}$. For each $k=1,\ldots,n$, let $a^1_k,a^2_k,a^3_k,\ldots$ be a sequence in $D_X$ such that with $a^i_k \rightarrow a_k$ as $i \rightarrow \infty$. We define $A^i = \{a^i_1,a^i_2,\ldots,a^i_n\}$.
\begin{eqnarray*}
\delta_Y( \bar \phi (A) ) &= & \delta_Y ( \bar \phi ( \lim_i A^i )) \\
&= & \delta_Y ( \lim_i \bar \phi (A^i)) \\
& = & \lim_i \delta_Y (\bar \phi(A_i)) \\
& =& \lim_i \delta_X ( A_i) \\
& = & \delta_X ( \lim_i A_i)  = \delta_X (A).
\end{eqnarray*}
where we have used the uniform continuity of $\delta_X$ and $\delta_Y$, by Proposition~\ref{prop:1-Lipschitz}.
\end{proof}

\begin{thm} \label{thm:urysohn_unique}
Let $(X,\delta_X)$ and $(Y,\delta_Y)$ be Polish diversities with the extension property. Then $(X,\delta_X)$ and $(Y,\delta_Y)$ are isomorphic.
\end{thm}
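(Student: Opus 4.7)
The plan is a standard back-and-forth construction, building a partial isomorphism between countable dense subsets step by step using the extension property, and then invoking \cref{lem:cont_extense} to extend by continuity to all of $X$.

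First I would fix countable dense subsets $D_X = \{x_1, x_2, \ldots\} \subseteq X$ and $D_Y = \{y_1, y_2, \ldots\} \subseteq Y$, which exist by separability. I will inductively construct finite partial isomorphisms $\phi_n \colon F_n \to G_n$ (with $F_n \subseteq X$, $G_n \subseteq Y$ finite) such that $\phi_n$ preserves $\delta$, i.e.\ $\delta_Y(\phi_n(A)) = \delta_X(A)$ for every $A \subseteq F_n$. Start with $\phi_0$ the empty map. At odd stages $n = 2k-1$, pick the least-indexed $x_k \in D_X$ not yet in $F_{n-1}$ and define $f \colon \Pf(G_{n-1}) \to \RR$ by $f(A) = \delta_X(\phi_{n-1}^{-1}(A) \cup \{x_k\})$. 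Because $\phi_{n-1}$ is a diversity-isomorphism onto $G_{n-1}$, the function $f$ is precisely the admissible function on $(G_{n-1}, \delta_Y)$ describing the one-point extension corresponding to the actual extension of $F_{n-1}$ by $x_k$ inside $(X, \delta_X)$; in particular $f$ satisfies conditions (i)--(iv) of \cref{lemma:admissible_criteria}. The extension property of $Y$ then yields $y \in Y$ with $\delta_Y(A \cup \{y\}) = f(A)$ for all $A \subseteq G_{n-1}$; set $F_n = F_{n-1} \cup \{x_k\}$, $G_n = G_{n-1} \cup \{y\}$, and $\phi_n = \phi_{n-1} \cup \{(x_k, y)\}$. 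At even stages $n = 2k$, I do the symmetric step, adding $y_k$ to the range and using the extension property of $X$ to find its preimage.

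Next I would verify that $\phi_n$ remains an isomorphism after each extension. For any $A \subseteq F_{n-1}$ and the new point $x_k$, unfolding the definitions gives
\[
\delta_Y(\phi_n(A) \cup \{y\}) = f(\phi_{n-1}(A)) = \delta_X(\phi_{n-1}^{-1}(\phi_{n-1}(A)) \cup \{x_k\}) = \delta_X(A \cup \{x_k\}),
\]
and for $A \subseteq F_{n-1}$ alone the preservation is inherited from $\phi_{n-1}$. The even stages are symmetric. Let $D_X' = \bigcup_n F_n$ and $D_Y' = \bigcup_n G_n$, and $\phi = \bigcup_n \phi_n \colon D_X' \to D_Y'$. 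Then $\phi$ is a bijective diversity-isomorphism, and by construction $D_X \subseteq D_X'$, $D_Y \subseteq D_Y'$, so $D_X'$ is dense in $X$ and $D_Y'$ is dense in $Y$.

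Finally I would apply \cref{lem:cont_extense} twice: once to extend $\phi$ to a diversity-embedding $\bar{\phi} \colon X \to Y$, and once to extend $\phi^{-1}$ to a diversity-embedding $\bar{\psi} \colon Y \to X$. A uniqueness argument (both $\bar{\psi} \circ \bar{\phi}$ and the identity extend $\phi^{-1} \circ \phi = \mathrm{id}_{D_X'}$ continuously) shows these are mutually inverse, so $\bar{\phi}$ is an isomorphism.

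The main obstacle is the back-and-forth step of producing the admissible function $f$ and verifying that it really is admissible; the key observation is that the ``desired'' diversity on $G_{n-1} \cup \{z\}$ obtained by pulling the putative one-point extension across the existing partial isomorphism is automatically a diversity, so the corresponding $f$ automatically satisfies \cref{lemma:admissible_criteria}. Once this is in hand, the rest is routine bookkeeping together with the continuity extension from \cref{lem:cont_extense}.
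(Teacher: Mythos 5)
Your proposal is correct and follows essentially the same back-and-forth construction as the paper: build partial isomorphisms on countable dense subsets using the extension property of $Y$ and of $X$ alternately, then extend to the whole spaces via \cref{lem:cont_extense}. The only differences are cosmetic (empty initial map versus a one-point seed, and your slightly more explicit argument that the continuous extensions of $\phi$ and $\phi^{-1}$ are mutually inverse, which the paper leaves implicit).
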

\begin{proof}
We mostly follow the proof of \cite[Thm.\ 1.2.5]{Gao2009}.
Let $\{ x_0, x_1, x_2, \ldots\}$ be a dense set in $X$ and let $\{y_0, y_1, y_2, \ldots\}$ be a dense set in $Y$. We will define a diversity isomorphism between these dense sets and then extend it to the whole space.

We will construct a sequence of partial diversity isomorphisms $\phi_0, \phi_1, \phi_2, \ldots$
Let $\phi_0$ be defined on the single point $x_0$ so that $\phi_0(x_0)=y_0$.

 At stage $n>0$, suppose that $\phi_{n-1}$ has been defined so that $\{x_0, \ldots, x_{n-1}\} \subseteq \mbox{dom}(\phi_{n-1})$ and $\{y_0, \ldots, y_{n-1} \} \subseteq \mbox{range}(\phi_{n-1})$. If $x_n \in \mbox{dom}(\phi_{n-1})$ then we let $\phi'=\phi_{n-1}$. Otherwise, let $F = \mbox{range}(\phi_{n-1})$ and consider the admissible function on $F$ defined by
\[
f(A) = \delta_X( \phi_{n-1}^{-1}(A) \cup \{ x_n\})
\]
for finite $A \subseteq F$.
By the extension property  of $Y$ there is $y \in Y$ so that 
\[
\delta_Y(A \cup \{y\}) = f(A) = \delta_X(\phi_{n-1}^{-1} (A)  \cup \{ x_n\}).
\]
We extend $\phi_{n-1}$ to $\phi'$ by defining $\phi'(x_n)=y$. Now if $y_n \in \mbox{range}(\phi')$ then we let $\phi_n = \phi'$ and go on to the next stage.  Otherwise apply the above argument to $\phi'^{-1}$ and use the extension property of $X$ to obtain an extension of $\phi'$. Define $\phi_n$ to be this extension. We have thus finished the definition of $\phi_n$. Let $\phi$ be the union of all $\phi_n$ we have defined. Then it has the required properties.

Finally we use Lemma~\ref{lem:cont_extense} to extend $\phi$ between $\{x_0, x_1, x_2, \ldots\}$ and $\{y_0, y_1, y_2, \ldots \}$ to a isomorphism between $X$ and $Y$.
\end{proof}

The completion of a diversity is defined in \cite{Poelstra13}: we take the completion of the diversity's induced metric space, and then extend the original diversity function to this larger set using continuity.

Following \cite{Melleray2008a} we define the following weakened version of the extension property.
\begin{defn}
A diversity $(X,\delta)$  has the approximate extension property if for any finite subset $F$ of $X$, any admissible function $f$ on $F$, and any $\epsilon >0$, there is  an $x \in X$
such that  $|\delta(A \cup \{x\})-f(A)| \leq \epsilon$ for any $A \subseteq F$.
\end{defn}

\begin{lem} \label{lem:approx_extens}
If a separable diversity has the approximate extension property, then its completion has the approximate extension property.
\end{lem}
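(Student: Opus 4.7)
The plan is this. Given $\bar F=\{\bar a_1,\dots,\bar a_n\}\subseteq \bar X$, an admissible function $\bar f$ on $\bar F$, and $\epsilon>0$, pick $\eta>0$ small (to be fixed later) and use density of $X$ in $\bar X$ to choose $a_i\in X$ with $\bar d(a_i,\bar a_i)<\eta$. Write $F=\{a_1,\dots,a_n\}$. The aim is to transfer $\bar f$ to an admissible function $f^*$ on $(F,\delta\upharpoonright F)$ that approximates it, apply the approximate extension property of $(X,\delta)$ to $f^*$ to produce an $x\in X\subseteq\bar X$, and use Lipschitz continuity to conclude that $x$ approximately realises $\bar f$ on $\bar F$.

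The heart of the argument is the construction of $f^*$. I work inside the completion. Let $G:=F\cup\bar F\subseteq\bar X$, viewed as a finite subdiversity $(G,\bar\delta\upharpoonright G)$. Applying \cref{lemma:extend_from_support} to the ambient diversity $(G,\bar\delta\upharpoonright G)$ with support $S=\bar F$ produces $\tilde f:=\bar f_{\bar F}^G$, an admissible function on $G$ that agrees with $\bar f$ on finite subsets of $\bar F$. Set $f^*:=\tilde f\upharpoonright\Pf(F)$. To see $f^*$ is admissible on $(F,\delta\upharpoonright F)$: admissibility of $\tilde f$ supplies a diversity on $G\cup\{z\}$ extending $\bar\delta\upharpoonright G$ via $A\cup\{z\}\mapsto\tilde f(A)$; restricting this diversity to $F\cup\{z\}$ gives a diversity extending $\delta\upharpoonright F=\bar\delta\upharpoonright F$ (since the completion extends $\delta$) and realising $A\cup\{z\}\mapsto f^*(A)$, which is exactly admissibility of $f^*$ on $(F,\delta\upharpoonright F)$.

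Now apply the approximate extension property of $(X,\delta)$ to $f^*$ with tolerance $\eta$, producing $x\in X$ with $|\delta(A\cup\{x\})-f^*(A)|<\eta$ for every $A\subseteq F$. For an arbitrary $A'\subseteq\bar F$, let $A\subseteq F$ be its image under the correspondence $\bar a_i\leftrightarrow a_i$. \Cref{lemma:admissible_continuity} applied to $\tilde f$ gives $|f^*(A)-\bar f(A')|=|\tilde f(A)-\tilde f(A')|\leq n\eta$, while \cref{prop:1-Lipschitz} applied to $\bar\delta$ gives $|\bar\delta(A'\cup\{x\})-\bar\delta(A\cup\{x\})|\leq n\eta$; moreover $\bar\delta(A\cup\{x\})=\delta(A\cup\{x\})$ because $A\cup\{x\}\subseteq X$. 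Chaining these bounds yields $|\bar\delta(A'\cup\{x\})-\bar f(A')|\leq(2n+1)\eta$, which is $<\epsilon$ once $\eta$ is chosen small enough at the outset.

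The main obstacle is producing an admissible function $f^*$ on the actual diversity $(F,\delta\upharpoonright F)$, not on some perturbed copy of $(\bar F,\bar\delta\upharpoonright\bar F)$; the one-point amalgamation over $G\supseteq F$ performed inside the completion sidesteps this cleanly, since the witnessing diversity on $G\cup\{z\}$ restricts automatically to one on $F\cup\{z\}$ extending $\delta\upharpoonright F$. The remaining approximate-transport steps are then routine applications of the Lipschitz estimates already established.
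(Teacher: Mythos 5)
Your proof is correct, but it takes a genuinely different route from the one in the paper. The paper moves the finite set $F$ into the dense subset via a bijection $\gamma$ and then repairs admissibility by hand: it orders the nonempty subsets of $F$ so that supersets precede subsets, defines $g(\gamma A_i)=f(A_i)+i\epsilon_0$, and verifies conditions (i)--(iv) of \cref{lemma:admissible_criteria} directly, the graded increments $\epsilon_{A_i}$ absorbing the $O(\epsilon_0)$ damage that the perturbation $\gamma$ does to the inequalities. You avoid that repair entirely: by amalgamating $\bar f$ over $G=F\cup\bar F$ inside the completion via \cref{lemma:extend_from_support} and restricting the witnessing diversity on $G\cup\{z\}$ to $F\cup\{z\}$, you obtain a function $f^*$ that is \emph{exactly} admissible on $(F,\delta\upharpoonright F)$, and the only approximation left is the routine transport of errors through \cref{lemma:admissible_continuity} and \cref{prop:1-Lipschitz}. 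Your approach buys a cleaner argument with no combinatorial $\epsilon$-bookkeeping and reuses a lemma already proved for other purposes; the paper's approach is self-contained at the level of the four admissibility conditions and does not require forming the auxiliary finite diversity $G$ inside the completion. One cosmetic caveat: if two points of $\bar F$ are within $2\eta$ of each other your approximants $a_i$ need not be distinct, so $|F|$ may drop below $n$; this is harmless since \cref{lemma:admissible_continuity} and the Lipschitz estimate both work with listings rather than sets, but it is worth a sentence.
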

\begin{proof}
Let $(X,\delta)$ be a diversity that is the completion of dense subset $D$, where $(D,\delta)$ has the approximate extension property. Let $F$ be a finite subset of $X$, $f \in E(F)$, and $\epsilon>0$ be given. We need to find a point $y \in X$ such that $|\delta(A \cup \{y\}) - f(A) | \leq \epsilon$ for all $A \subseteq F$.

Order all non-empty subsets of $F$, $A_1, \ldots, A_{2^n-1}$ 
so that if $A_j \subseteq A_i$ then $j \geq i$. 
Let $\epsilon_0 = \epsilon/2(2^n +n)$.
Define a bijective map $\gamma$ from $F$ to $\gamma F \subseteq D$ so that for all nonempty $A \subseteq F$,  $|\delta(\gamma A) - \delta(A)|<\epsilon_0$, which is possible by Proposition~\ref{prop:1-Lipschitz}.

Define $g \colon \Pf(\gamma F) \rightarrow \mathbb{R}$ by  $g(\emptyset)=0$ and 
\[
g(\gamma A_i) = f(A_i) + \epsilon_{A_i}, \ \ \ \ \  \mbox{for } i = 1, \ldots, 2^n -1,
\]
where $\epsilon_{A_i} = i \epsilon_0$.  Note that $g$ is monotonic by construction.
We claim that $g \in E(\gamma F)$. 

To show $g \in E(\gamma F)$ we need to verify the four conditions of Lemma~\ref{lemma:admissible_criteria}. Condition (i) ($g(\emptyset)=0$) follows by definition.
To obtain condition (ii), note that for non-empty $A$, $g(\gamma A) = f(A) + \epsilon_A \geq \delta(A) + \epsilon_A \geq \delta(\gamma A) - \epsilon_0 + \epsilon_A \geq \delta(\gamma A)$. 
For condition (iii), we first observe that for any admissible function $f$ on $F$ and $C \neq \emptyset$ we have from the triangle inequality 
\[
f(A \cup C) + \delta(B \cup C) = f((A\cup C) \cup C) + \delta(B \cup C) \geq f(A \cup B \cup C).
\]
So,  given $A, B, C \subseteq F$,  with $C \neq \emptyset$,
\begin{eqnarray*}
g(\gamma A \cup \gamma C) + \delta(\gamma B \cup \gamma C) &\geq &  f(A \cup C) + \epsilon_{A \cup C} + \delta( B \cup C) - \epsilon_0 \\
& \geq  & f(A \cup B \cup C) + \epsilon_{A \cup C} - \epsilon_0 \\
& = &  g(\gamma A \cup \gamma B \cup \gamma C) - \epsilon_{A \cup B \cup C} + \epsilon_{A \cup C} - \epsilon_0 \\
& \geq & g( \gamma A \cup \gamma B)
\end{eqnarray*}
where we use the fact that $g$ is monotonic and that $A \cup B \cup C$ is later than $A \cup C$ on the list of subsets, and so $\epsilon_{A \cup B \cup C} + \epsilon_0 \leq \epsilon_{A \cup C}$. 
Now for condition (iv) we have
\begin{eqnarray*}
g(\gamma A) + g(\gamma B) & \geq & f(A) + \epsilon_A + f(B) + \epsilon_B \\
& \geq & f(A \cup B) + \epsilon_A + \epsilon_B \\
& \geq & g(\gamma A \cup \gamma B) - \epsilon_{A \cup B} + \epsilon_A + \epsilon_B\\
& \geq & g(\gamma A \cup \gamma B)
\end{eqnarray*}
since $\epsilon_{A \cup B} \leq \epsilon_A$.

So $g$ is admissible on $\gamma F$.
By the approximate extension property of $(D,\delta)$, there is a point $y$ such that $|g(\gamma A) - \delta(\gamma A \cup \{y\})| \leq \epsilon/2$ for all $A \subseteq F$. 

Now for any $A \subseteq F$ 
\begin{eqnarray*}
| f(A) - \delta( A \cup \{ y\}) | &  \leq  & | f(A) - g(\gamma A) | + | g(\gamma A) -\delta(\gamma A \cup \{y\})| + | \delta(\gamma A \cup \{y\}) - \delta( A \cup \{y\}) |  \\
& \leq & \epsilon_A + \epsilon/2 + n \epsilon_0  \leq 2^n \epsilon_0 + \epsilon/2 + n \epsilon_0 \leq \epsilon
\end{eqnarray*}
as required.
\end{proof}

\begin{lem} \label{lem:complete_extense}
Any complete diversity with the approximate extension property has the extension property.
\end{lem}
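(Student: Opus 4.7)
The plan follows the standard Cauchy-sequence strategy familiar from the metric Urysohn construction. Given a finite $F \subseteq X$ and $f \in E(F)$, I will inductively build a sequence $(x_n)$ in $X$ which is Cauchy and whose approximation to $f$ tightens geometrically, so that $\eta_n := \max_{A \subseteq F} |f(A) - \delta(A \cup \{x_n\})| \to 0$. Completeness then yields a limit $x$, and the 1-Lipschitz continuity of $\delta_k$ from Proposition~\ref{prop:1-Lipschitz} forces $\delta(A \cup \{x\}) = \lim_n \delta(A \cup \{x_n\}) = f(A)$ for every $A \subseteq F$, giving the extension property.

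Fix $\epsilon_n = 2^{-n}$. For the base case, the approximate extension property applied directly to $f$ with tolerance $\epsilon_1$ produces $x_1$ with $\eta_1 \leq \epsilon_1$. For the inductive step, given $x_n$ with $\eta_n \leq \epsilon_n$, the key is to construct an admissible function $f_n \in E(F \cup \{x_n\})$ satisfying (i) $f_n(A) = f(A)$ for all $A \subseteq F$, and (ii) $f_n(\{x_n\})$ bounded by a constant multiple of $\eta_n$. Once $f_n$ is in hand, applying approximate extension to $f_n$ with tolerance $\epsilon_{n+1}$ gives $x_{n+1}$ with $|f_n(B) - \delta(B \cup \{x_{n+1}\})| \leq \epsilon_{n+1}$ for all $B \subseteq F \cup \{x_n\}$. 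Instantiating $B = \{x_n\}$ bounds $d(x_n, x_{n+1}) \leq f_n(\{x_n\}) + \epsilon_{n+1} = O(\epsilon_n)$, a summable bound forcing $(x_n)$ to be Cauchy; instantiating $B = A \subseteq F$ and using (i) yields $\eta_{n+1} \leq \epsilon_{n+1}$, continuing the induction with geometric decay.

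The main obstacle is constructing $f_n$. The intuition is that $f_n$ should correspond to a one-point extension of $F \cup \{x_n\}$ by an auxiliary point $z$ playing the role of the virtual point of $f$ on $F$ and lying within distance $O(\eta_n)$ of $x_n$ in the amalgamated structure. I would build this amalgamation by identifying the diversity on $F \cup \{x_n\}$ (the restriction of $\delta$) with the diversity on $F \cup \{z\}$ determined by $f$ along their common restriction to $F$, and setting $\delta(\{x_n, z\}) = \eta_n$; the triangle-type constraints for the amalgamated diversity are compatible with this choice precisely because $\eta_n = \max_A |f(A) - \delta(A \cup \{x_n\})|$ bounds the needed discrepancy. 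Taking $f_n(S) := \delta(S \cup \{z\})$ for $S \subseteq F \cup \{x_n\}$, I would then verify the admissibility conditions (i)--(iv) of Lemma~\ref{lemma:admissible_criteria} by a case analysis on whether $x_n$ lies in each of the sets $A$, $B$, $C$ appearing in the axioms, invoking admissibility of $f$ on $F$, monotonicity and subadditivity on overlapping sets for $\delta$, and the discrepancy bound $\eta_n$.
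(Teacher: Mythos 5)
Your overall architecture is the same as the paper's: build a sequence $(x_n)$ realizing $f$ within $2^{-n}$, make $d(x_n,x_{n+1})$ summable by routing the $(n+1)$st application of approximate extension through an auxiliary admissible function on $F\cup\{x_n\}$ that extends $f$ and assigns a value $O(\eta_n)$ to $\{x_n\}$, then pass to the limit using completeness and Proposition~\ref{prop:1-Lipschitz}. The gap is in the one step that carries all the weight: the existence of $f_n$. You assert that the diversities on $F\cup\{x_n\}$ and on $F\cup\{z\}$ (the latter given by $f$) amalgamate over $F$ once one sets $\widehat\delta(\{x_n,z\})=\eta_n$, but you never define $\widehat\delta$ on the sets $A\cup\{x_n,z\}$ with $\emptyset\neq A\subseteq F$, and the compatibility you invoke only shows that each such value is \emph{individually} sandwiched between $\max\bigl(f(A),\delta(A\cup\{x_n\})\bigr)$ and $\min\bigl(f(A),\delta(A\cup\{x_n\})\bigr)+\eta_n$. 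Unlike the metric case, where a one-point amalgam is certified by checking triangles one at a time, axiom (D2) here quantifies over three arbitrary finite subsets of $F\cup\{x_n,z\}$, so the ``case analysis on whether $x_n$ lies in $A$, $B$, $C$'' is a genuine verification that you have not carried out. Note also that the canonical amalgam $(f)^{F\cup\{x_n\}}_{F}$ supplied by Lemma~\ref{lemma:extend_from_support} is the \emph{maximal} extension and gives $f_n(\{x_n\})=\inf_{b\in F}\bigl(f(\{b\})+d(x_n,b)\bigr)$ up to refinements, which need not be small; so you really do need a different, minimal-type amalgam, and its existence is the content of the inductive step, not a routine check.

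The paper closes exactly this gap with a device you could adopt verbatim: realize the amalgam inside $(E(F),\widehat\delta)$, which Theorem~\ref{thm:proof_of_diversity} has already certified to be a diversity containing the isomorphic copy $\kappa(F)$ of $F$. Both $f$ and $f_n:=\delta(\,\cdot\,\cup\{x_n\})$ are honest points of $E(F)$, with $\widehat\delta(\{f_n,f\})=\sup_{A}|f_n(A)-f(A)|=\eta_n$, and $\kappa(F)\cup\{f_n\}$ is isomorphic to $F\cup\{x_n\}$. Defining $g_n(A)=\widehat\delta(\kappa(A)\cup\{f\})=f(A)$ and $g_n(A\cup\{x_n\})=\widehat\delta(\kappa(A)\cup\{f_n,f\})$ produces a function on $F\cup\{x_n\}$ that is admissible \emph{for free}, because it is realized by the point $f$ inside a diversity that has already been verified, and it satisfies $g_n(\{x_n\})=\eta_n$. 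With $f_n$ replaced by this $g_n$, the rest of your argument (the Cauchy estimate, the decay of $\eta_n$, and the limit step) goes through as written.
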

\begin{proof}
Our proof follows that of the metric case in Theorem 3.4 of \cite{Melleray2008a} and Theorem 1.2.7 of \cite{Gao2009}.

Let $(X,\delta)$ be a complete diversity with the approximate extension property. Let finite $F \subseteq X$ be given, and let $f \in E(F)$. It suffices to show there is a sequence $z_0, z_1, \ldots$ in $X$ such that for all $p$, $| \delta( A \cup \{z_p\}) - f(A) | \leq 2^{-p}$ for all $A \subseteq F$ and $\delta(\{z_p, z_{p+1}\} ) \leq 2^{1-p}$. Since $X$ is complete and $f$ is continuous, the sequence will have a limit $z \in X$ such that $\delta( A \cup \{z\}) = f(A)$ for all $A \subseteq F$.

By the approximate extension property of $(X,\delta)$ we can define $z_0$. To use induction, suppose we have $z_0, z_1, \ldots, z_p$ satisfying the conditions and we need to specify $z_{p+1}$. Let $f_p \in E(F)$ be defined by $f_p(A) = \delta( A \cup \{z_p\})$ for $A \subseteq F$. Note that for all $A$
\[
|f_p(A) - f(A) | = | \delta( A \cup \{z_p\}) - f(A) | \leq 2^{-p}.
\]
So $\widehat \delta( \{ f_p, f\}) \leq 2^{-p}$.

Now let $g_p$ be defined on $F \cup \{z_p\}$ by $g_p(A) = f(A)$, $g_p (A \cup \{z_p\}) =\widehat \delta( A \cup \{f_p,f\})$. This is in an admissible function on $F \cup \{z_p\}$ because it is realized by the points $F \cup \{f_p, f\}$ in $E(F)$. So by the approximate extension property there is a $z \in X$ that realizes $g_p$ with error at most $2^{-(p+1)}$. In other words
\[
|\delta( A \cup \{z\}) - g_p(A) | \leq 2^{-(p+1)}, \ \ \ \ \ |\delta(A \cup \{z,z_p\}) -g_p(A \cup \{z_p\}) | \leq 2^{-(p+1)}.
\]
The first inequality shows that $|\delta(A \cup \{z\}) - f(A) | \leq 2^{-(p+1)}$ and the second inequality shows that, choosing $A = \emptyset$
\[
\delta(\{z_p, z\}) \leq g_p(\{z_p\})  + 2^{-(p+1)} = \widehat \delta( \{f_p,f\}) + 2^{-(p+1)} \leq 2^{-p} + 2^{-(p+1)} \leq 2^{-p+1}.
\]
Now let $z_{p+1}=z$.
\end{proof}

\begin{thm} \label{thm:completion_still_has_extension_prop}
If $(X,\delta)$ is a separable diversity with the extension property then its completion also has the extension property.
\end{thm}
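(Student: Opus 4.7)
The plan is to chain together the two lemmas immediately preceding the theorem, using the trivial observation that the (exact) extension property implies the approximate extension property.

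First I would note that if $(X,\delta)$ has the extension property, then it certainly has the approximate extension property: given finite $F \subseteq X$, $f \in E(F)$, and $\epsilon > 0$, any witness $x \in X$ to the extension property (giving $f(A) = \delta(A \cup \{x\})$) also witnesses the approximate extension property at tolerance $\epsilon$. Since $(X,\delta)$ is separable by hypothesis, Lemma~\ref{lem:approx_extens} applies and tells us that the completion $(\bar X, \bar \delta)$ has the approximate extension property.

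Next, by the construction of the completion (and in particular since the induced metric of $\bar X$ is complete by definition), $(\bar X, \bar \delta)$ is a complete diversity. Lemma~\ref{lem:complete_extense} then upgrades the approximate extension property on $\bar X$ to the full extension property, which is the conclusion.

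There is essentially no obstacle here, since the heavy lifting has already been done in Lemmas~\ref{lem:approx_extens} and~\ref{lem:complete_extense}; the only thing to verify is that the hypothesis ``separable'' of Lemma~\ref{lem:approx_extens} transfers from $(X,\delta)$ to the setup (it does, by assumption) and that one may regard admissible functions on finite subsets of $\bar X$ as admissible functions on finite subsets of the original completed space (which is immediate from the definitions, since $\bar \delta$ restricted to $X$ agrees with $\delta$ and the completion preserves the diversity structure). Thus the proof is a two-line composition of the preceding lemmas.
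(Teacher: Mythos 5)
Your proposal is correct and follows exactly the same route as the paper: note that the extension property trivially implies the approximate extension property, apply Lemma~\ref{lem:approx_extens} to transfer the approximate extension property to the completion, and then invoke Lemma~\ref{lem:complete_extense} using completeness of the completion. Nothing is missing.
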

\begin{proof}
Since $(X,\delta)$ has the extension property, it certainly has the approximate extension property. By Lemma~\ref{lem:approx_extens} the completion of $(X,\delta)$ has the approximate extension property. Then by Lemma~\ref{lem:complete_extense} the completion of $(X,\delta)$ has the extension property, being complete.
\end{proof}

We now work towards defining a complete separable diversity with the extension property. We start with a given diversity $(X,\delta)$. We let $X_0=X$, $\delta_0= \delta$. Now, for $n>0$ we inductively define $(X_n, \delta_n)$ by letting $X_n = E(X_{n-1},\omega)$ with the diversity $\delta_n = \widehat \delta_{n-1}$. We define $(X_\omega,\delta_\omega)$ to be the union of all these diversities, which is well-defined because each $(X_n, \delta_n)$ is embedded in  $(X_{n+1},\delta_{n+1})$.

\begin{thm}
For any diversity $(X,\delta)$ the diversity $(X_\omega, \delta_\omega)$ has the extension property.
\end{thm}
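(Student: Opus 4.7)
The plan is to explicitly construct, for any finite $F\subseteq X_\omega$ and any admissible $f\in E(F)$, a witness $x\in X_\omega$ such that $\delta_\omega(A\cup\{x\})=f(A)$ for every finite $A\subseteq F$. Since $F$ is finite and $X_\omega=\bigcup_n X_n$ is a nested union, I would first fix an $n$ with $F\subseteq X_n$. Then $f\in E(F)$ lives inside $E(F)$, and Lemma~\ref{lemma:extend_from_support} gives the finitely-supported admissible function $g:=f^{X_n}_F \in E(X_n)$, which extends $f$ (so $g\upharpoonright F=f$) and has $F$ as a support. Hence $g\in E(X_n,\omega)=X_{n+1}\subseteq X_\omega$, and I take $x:=g$.

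The remaining task is to verify, for every finite $A=\{a_1,\dots,a_m\}\subseteq F$, the key equality
\[
\widehat{\delta}_n\!\left(\{\kappa_{a_1},\dots,\kappa_{a_m},g\}\right)=g(A),
\]
where the embedding $X_n\hookrightarrow X_{n+1}$ sends $a$ to $\kappa_a(B)=\delta_n(B\cup\{a\})$. This equality, combined with $g(A)=f(A)$ and the fact that $X_{n+1}$ embeds isometrically into $X_\omega$, finishes the proof. To establish it I would expand the definition \eqref{eq:hatdelta} and split into two cases according to which index $j$ achieves the outer maximum.

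In the case $j=m+1$ (the index of $g$), the supremand is $g(\bigcup_{i\le m}A_i)-\sum_{i\le m}\delta_n(A_i\cup\{a_i\})$. Setting $A_i=\{a_i\}$ realises the value $g(A)$. For the matching upper bound I would prove by induction on $|I|\subseteq\{1,\dots,m\}$, using property (iii) of admissibility for $g$ with the triple $(\bigcup_{i\in I}A_i\cup\{a_j:j\notin I\cup\{i_0\}\},\,A_{i_0},\,\{a_{i_0}\})$, that
\[
g\!\left(\bigcup_{i\in I}A_i\cup\{a_j:j\notin I\}\right)\le g(A)+\sum_{i\in I}\delta_n(A_i\cup\{a_i\}),
\]
and then take $I=\{1,\dots,m\}$. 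In the case $j\le m$, the supremand is $\delta_n(\bigcup_{i\ne j,\,i\le m}A_i\cup A_{m+1}\cup\{a_j\})-\sum_{i\ne j,\,i\le m}\delta_n(A_i\cup\{a_i\})-g(A_{m+1})$. Here I would think of $g$ as recording the diversity $\delta^*$ on $X_n\cup\{z\}$ whose existence is guaranteed by admissibility, apply monotonicity to enlarge the argument to $\{a_1,\dots,a_m,z\}\cup\bigcup_{i\ne j}A_i\cup A_{m+1}$, and invoke the connected-cover version of the triangle inequality (D2) with the cover $\{a_1,\dots,a_m,z\}$, $A_i\cup\{a_i\}$ for $i\ne j$, and $A_{m+1}\cup\{z\}$ (connected through the $a_i$'s and $z$) to get the upper bound $g(A)+\sum_{i\ne j}\delta_n(A_i\cup\{a_i\})+g(A_{m+1})$.

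The main obstacle is the second case: getting the correct connected cover and confirming that the iterated triangle inequality for diversities gives exactly the required combination. This is essentially the same argument that justified the lower bounds motivating the definition \eqref{eq:hatdelta}, now run in the opposite direction and for the specific choice $x=f^{X_n}_F$. Everything else is bookkeeping: finiteness of $F$ yields the stage $n$, Lemma~\ref{lemma:extend_from_support} produces the candidate $x$, and the isometric inclusions $X_n\hookrightarrow X_{n+1}\hookrightarrow X_\omega$ transport the identity from $\widehat{\delta}_n$ to $\delta_\omega$.
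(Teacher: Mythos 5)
Your proposal is correct and follows the same route as the paper: pass to a stage $n$ with $F\subseteq X_n$, extend $f$ to the finitely supported admissible function $f^{X_n}_F\in E(X_n,\omega)=X_{n+1}$ via Lemma~\ref{lemma:extend_from_support}, and take that function as the witness point. The paper compresses the remaining verification into the phrase ``by construction'' (resting on the earlier assertion that $\widehat\delta$ is the minimal diversity satisfying \eqref{eq:div_conditions}), whereas your two-case analysis of \eqref{eq:hatdelta} --- realizing $g(A)$ with $A_i=\{a_i\}$ and bounding both the $j=m+1$ and $j\le m$ supremands above by $g(A)$ via property (iii) and the connected-cover triangle inequality --- supplies exactly the upper-bound check that the paper leaves implicit.
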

\begin{proof}
Let $F$ be a finite subset of $X_\omega$, and let $f$ be an admissible function on $F$.  $F$ must be contained in $X_n$ for some $n$. By construction, there is some $x \in X_{n+1}$ such that $f(A) = \delta(A \cup \{x\})$ for all $A \subseteq F$. So there is such an $x$ in $X_\omega$.
\end{proof}

We define the diversity $(\mathbb{U},\delta_{\mathbb{U}})$ to be the completion of $(X_\omega,\delta_\omega)$ when $(X,\delta)$ is the trivial diversity of a single point.  By Theorem \ref{thm:completion_still_has_extension_prop}, $(\mathbb{U},\delta_{\mathbb{U}})$ also has the extension property.

We say that a Polish diversity is universal if any separable diversity is isomorphic to a subset it.

\begin{thm}
$(\mathbb{U}, \delta_{\mathbb{U}})$ is a universal Polish diversity.
\end{thm}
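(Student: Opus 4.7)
\medskip

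\noindent\textbf{Proof proposal.} The plan is to give a standard ``forth'' (one-sided back-and-forth) construction: given a separable diversity $(Y, \delta_Y)$, fix a countable dense subset $\{y_0, y_1, y_2, \ldots\}$ and inductively build an isometric embedding $\phi$ of this dense set into $\mathbb{U}$ using the extension property of $\mathbb{U}$; then extend $\phi$ by continuity to all of $Y$ via Lemma~\ref{lem:cont_extense}. Since $\mathbb{U}$ is Polish (separability comes from the fact that each $X_n = E(X_{n-1},\omega)$ is separable when $X_{n-1}$ is, so $X_\omega$ is separable as a countable union, and its completion is too; completeness is by construction), and since $\mathbb{U}$ enjoys the extension property, both ingredients needed for this argument are available.

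For the inductive construction, set $\phi(y_0)$ to be any point of $\mathbb{U}$. At stage $n>0$, assume that $\phi$ has been defined on $\{y_0, \ldots, y_{n-1}\}$ and preserves $\delta$ there, so that $F := \{\phi(y_0), \ldots, \phi(y_{n-1})\} \subseteq \mathbb{U}$ carries a diversity isomorphic (via $\phi^{-1}$) to the restriction of $\delta_Y$ to $\{y_0,\ldots,y_{n-1}\}$. Define $f \colon \Pf(F) \to \mathbb{R}$ by
\[
f(A) \;=\; \delta_Y\bigl(\phi^{-1}(A) \cup \{y_n\}\bigr), \qquad A \subseteq F.
\]
Then $f$ is admissible on $(F,\delta_{\mathbb{U}}\!\!\upharpoonright\! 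F)$ because it is literally induced by the diversity on the set $\{y_0,\ldots,y_n\}$, transported by $\phi$; one checks properties (i)--(iv) of Lemma~\ref{lemma:admissible_criteria} directly from (D1)--(D2) for $\delta_Y$. By the extension property of $\mathbb{U}$, there exists $x \in \mathbb{U}$ with $\delta_{\mathbb{U}}(A \cup \{x\}) = f(A)$ for every $A \subseteq F$. Define $\phi(y_n) = x$. An easy check (using $A = \{\phi(y_i)\}$) shows that $\phi$ remains diversity-preserving on $\{y_0, \ldots, y_n\}$.

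Taking the union of the $\phi$'s produced at each stage yields a diversity isomorphism from the countable dense set $\{y_n : n \in \mathbb{N}\}$ into $\mathbb{U}$. Since $\mathbb{U}$ is complete, Lemma~\ref{lem:cont_extense} extends $\phi$ uniquely to an isomorphism $\bar\phi$ from $Y$ into $\mathbb{U}$, witnessing the embedding of $(Y, \delta_Y)$ into $(\mathbb{U}, \delta_{\mathbb{U}})$ and so establishing universality.

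The routine parts are the verification that the function $f$ defined at each stage satisfies the four admissibility conditions, and the check that $\phi$ remains a partial isomorphism after each extension; both are immediate from the definitions. The only conceptual point to be careful about is the possibility that two distinct $y_i, y_j$ have $\delta_Y(\{y_i,y_j\})=0$, which forces $\phi(y_i)=\phi(y_j)$ in $\mathbb{U}$; this is not an obstacle since we only require an isometric (not injective) map, and such collisions can simply be handled by skipping the redundant stage.
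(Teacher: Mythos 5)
Your proof is correct and follows essentially the same ``forth'' argument as the paper: build a partial isomorphism on a countable dense subset of $(Y,\delta_Y)$ stage by stage using the extension property of $\mathbb{U}$, then extend by continuity via Lemma~\ref{lem:cont_extense}. (One small remark: the collision case you worry about at the end cannot occur, since axiom (D1) forces $\delta_Y(\{y_i,y_j\})>0$ for distinct points $y_i\neq y_j$.)
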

\begin{proof}
Let $(X,\delta)$ be an arbitrary separable diversity. We construct a sequence of partial isomorphisms whose union is the desired isomorphism. Let $x_0, x_1, x_2, \ldots$ be a dense sequence in $X$. Let $y$ be an arbitrary point in $\mathbb{U}$.   Let $\phi_0$ be defined on $\{ x_0 \}$ by $\phi_0(x_0)=y$. Now suppose that we have an isomorphism $\phi_n$ from $\{x_0, x_1, \ldots, x_n\}$ into $\mathbb{U}$, with $\phi(x_i)=y_i$ for $i=1,\ldots,n$. Define the admissible function on $\{y_0, \ldots, y_n\}$ for finite subset $A$ by $f(A)= \delta(\phi_n^{-1} (A) \cup x_{n+1})$. By the extension property, there is a point $y_{n+1}$ in $\mathbb{U}$ such that $\delta(  \phi_n^{-1} (A) \cup x_{n+1} ) =f(A)= \delta_{\mathbb{U}}( A \cup y_{n+1})$. Define $\phi_{n+1}$ by extending $\phi_n$ with one point with $\phi_{n+1}(x_{n+1})= y_{n+1}$. Now take the union of all of the $\phi_n$ to obtain an isomorphism between $\{x_0,x_1,x_2, \ldots\}$ and a subset of $\mathbb{U}$. By Lemma~\ref{lem:cont_extense} this isomorphism can be extended to all of $X$.
\end{proof}
 
A Polish diversity $(X,\delta)$ is ultrahomogeneous if  given any two isomorphic finite subsets $A, A' \subseteq X$, and any isomorphism $\phi \colon A \rightarrow A'$, there is an isomorphism of $(X,\delta)$ to itself that extends $\phi$.

\begin{thm}
$(\mathbb{U}, \delta_{\mathbb{U}})$ is  ultrahomogeneous. 
\end{thm}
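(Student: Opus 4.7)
The plan is a standard back-and-forth argument, analogous to the proof of Theorem~\ref{thm:urysohn_unique}, but starting from the given finite partial isomorphism rather than from a single point. Let $A, A' \subseteq \mathbb{U}$ be finite, isomorphic, and let $\phi \colon A \to A'$ be a given isomorphism. Fix a countable dense sequence $\{z_0, z_1, z_2, \ldots\}$ in $\mathbb{U}$.

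I will inductively construct a chain of finite partial isomorphisms $\phi_0 \subseteq \phi_1 \subseteq \phi_2 \subseteq \cdots$ of $\mathbb{U}$, starting with $\phi_0 = \phi$, so that at stage $2n+1$ the element $z_n$ lies in $\mathrm{dom}(\phi_{2n+1})$ and at stage $2n+2$ the element $z_n$ lies in $\mathrm{range}(\phi_{2n+2})$. For the odd step, given $\phi_{2n}$ defined on a finite set $F \subseteq \mathbb{U}$, I consider the admissible function $f$ on $\phi_{2n}(F)$ given by $f(B) = \delta_{\mathbb{U}}(\phi_{2n}^{-1}(B) \cup \{z_n\})$ and apply the extension property of $(\mathbb{U}, \delta_\mathbb{U})$ to obtain a point $y \in \mathbb{U}$ with $\delta_\mathbb{U}(B \cup \{y\}) = f(B)$ for all finite $B \subseteq \phi_{2n}(F)$; then set $\phi_{2n+1}(z_n) = y$. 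One verifies directly from the definition of $f$ that $\phi_{2n+1}$ remains a partial diversity isomorphism. The even step is symmetric, applied to $\phi_{2n+1}^{-1}$ with the point $z_n$ again; here I also use the fact that $\phi_{2n+1}^{-1}$ is a partial isomorphism, and apply the extension property of $\mathbb{U}$ once more.

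Let $\psi = \bigcup_n \phi_n$. By construction, $\mathrm{dom}(\psi)$ contains the dense set $\{z_n\}$ and so is dense in $\mathbb{U}$; similarly $\mathrm{range}(\psi)$ is dense in $\mathbb{U}$, and $\psi$ is a diversity isomorphism between these dense subsets. Applying Lemma~\ref{lem:cont_extense} to $\psi$, viewed as an isomorphism from the dense subspace $\mathrm{dom}(\psi)$ of $\mathbb{U}$ into the complete diversity $\mathbb{U}$, yields a uniformly continuous extension $\bar\psi \colon \mathbb{U} \to \mathbb{U}$ which is a diversity isomorphism onto its image. Applying the same lemma to $\psi^{-1}$ yields a continuous extension of $\psi^{-1}$ defined on all of $\mathbb{U}$, which must be the inverse of $\bar\psi$ by density; hence $\bar\psi$ is surjective. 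Since $\phi_0 = \phi \subseteq \bar\psi$, the map $\bar\psi$ is the desired self-isomorphism of $\mathbb{U}$ extending $\phi$.

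The main obstacle is ensuring that the back-and-forth actually produces an \emph{onto} isomorphism of $\mathbb{U}$; this is exactly what the alternation between forward and backward stages guarantees. The only other subtlety is checking that at each stage the extended map remains a partial isomorphism, but this is immediate from the defining property of the points produced by the extension property, since the admissible function $f$ encodes precisely the diversity values that must be matched.
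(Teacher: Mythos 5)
Your proposal is correct and follows essentially the same route as the paper: a back-and-forth construction starting from $\phi_0=\phi$, using the extension property of $(\mathbb{U},\delta_{\mathbb{U}})$ at each stage exactly as in Theorem~\ref{thm:urysohn_unique}, followed by an application of Lemma~\ref{lem:cont_extense} to pass from the dense partial isomorphism to a self-isomorphism of $\mathbb{U}$. The only cosmetic difference is that you interleave a single dense sequence for both domain and range, whereas the paper fixes separate dense sequences in $\mathbb{U}\setminus A$ and $\mathbb{U}\setminus A'$; this does not change the argument.
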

\begin{proof}
This proof follows the same plan as Theorem~\ref{thm:urysohn_unique}. Let $A, A'$ be two isomorphic subsets of $\mathbb{U}$, with isomorphism $\phi$ between them. Let $\{ x_1, x_2, \ldots \}$ be a dense subset of $\mathbb{U} \setminus A$ and let $\{y_1, y_2, \ldots \}$ be a dense subset of $\mathbb{U} \setminus A'$. Let $\phi_0= \phi$. Suppose we have defined $\phi_{n-1}$ so that it is an isomorphism and $A \cup \{x_1, \ldots, x_{n-1} \} \subseteq \mbox{dom}(\phi_{n-1})$ and $A' \cup \{y_1, \ldots, y_{n-1}\} \subseteq \mbox{range}(\phi_{n-1})$. Following the proof of Theorem~\ref{thm:urysohn_unique} yields a suitable $\phi_n$. Taking the union of these $\phi_{n}$ and applying Lemma~\ref{lem:cont_extense} gives the desired isomorphism from $\mathbb{U}$ to itself that is an extension of $\phi$.
\end{proof}

\begin{thm}
Any ultrahomogeneous, universal Polish diversity has the extension property, and is thus isomorphic to $(\mathbb{U},\delta_{\mathbb{U}})$.
\end{thm}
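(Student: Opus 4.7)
The plan is to reduce the statement to Theorem~\ref{thm:urysohn_unique} by showing that any ultrahomogeneous, universal Polish diversity $(Y,\delta_Y)$ enjoys the extension property; uniqueness then follows immediately, since $(\mathbb U,\delta_\UU)$ is itself a Polish diversity with the extension property.

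To verify the extension property, let $F\subseteq Y$ be finite and let $f\in E(F)$ be an admissible function. By definition of admissibility, there is a point $z$ (possibly in $F$) and a diversity $\widehat\delta$ on $F\cup\{z\}$ with $\widehat\delta\!\upharpoonright\!\Pf(F)=\delta_Y\!\upharpoonright\!\Pf(F)$ and $\widehat\delta(A\cup\{z\})=f(A)$ for all finite $A\subseteq F$. The diversity $(F\cup\{z\},\widehat\delta)$ is finite, hence separable. By universality, there is a diversity isomorphism $\psi$ from $(F\cup\{z\},\widehat\delta)$ onto a sub-diversity of $(Y,\delta_Y)$.

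Now $\psi\!\upharpoonright\!F$ is an isomorphism between the two finite sub-diversities $F$ and $\psi(F)$ of $Y$ (both carry the restriction of $\delta_Y$). By ultrahomogeneity, the inverse map $(\psi\!\upharpoonright\!F)^{-1}\colon\psi(F)\to F$ extends to an automorphism $\sigma$ of $(Y,\delta_Y)$; in particular $\sigma(\psi(a))=a$ for every $a\in F$. Set $y:=\sigma(\psi(z))\in Y$. For any finite $A\subseteq F$, since both $\sigma$ and $\psi$ preserve the diversity,
\begin{equation*}
\delta_Y(A\cup\{y\})
=\delta_Y\bigl(\sigma(\psi(A))\cup\sigma(\psi(z))\bigr)
=\delta_Y\bigl(\psi(A)\cup\{\psi(z)\}\bigr)
=\widehat\delta(A\cup\{z\})
=f(A),
\end{equation*}
so $y$ realizes $f$ over $F$. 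Hence $(Y,\delta_Y)$ has the extension property.

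Finally, because $(Y,\delta_Y)$ and $(\UU,\delta_\UU)$ are both Polish diversities with the extension property, Theorem~\ref{thm:urysohn_unique} yields an isomorphism between them. I do not expect a real obstacle here; the only delicate point is the book-keeping in the last display (making sure $\sigma\circ\psi$ fixes $F$ pointwise so that the transported extension lives over $F$ rather than over some isomorphic copy), and this is handled by choosing $\sigma$ to extend $(\psi\!\upharpoonright\!F)^{-1}$ rather than an arbitrary isomorphism of the two copies.
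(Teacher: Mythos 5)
Your proposal is correct and follows essentially the same route as the paper: embed the one-point extension $F\cup\{z\}$ into the space by universality, use ultrahomogeneity to extend the isomorphism between $F$ and its image to an automorphism fixing $F$ pointwise, and let the image of $z$ realize $f$, then invoke Theorem~\ref{thm:urysohn_unique}. Your bookkeeping with $\sigma\circ\psi$ is in fact slightly cleaner than the paper's, which writes $\phi'^{-1}(z)$ where $\phi'^{-1}(\phi(z))$ is meant.
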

\begin{proof}
Let $(X,\delta)$ be an ultrahomogeneous, universal Polish diversity. Let $F$ be a finite subset of $X$ and let $f$ be an admissible function on $F$. So we can define a diversity on $F \cup \{z\}$ for some $z$ such that $f(A) = \delta( A \cup \{z\})$ for $A \subseteq F$. Since $(X,\delta)$ is universal, there is an embedding $\phi$ taking $F \cup \{z\}$ into $X$. Let $F' = \phi(F)$.  Since $\phi$ is an isomorphism from $F$ to $F'$, there is an isomorphism $\phi'$ of  the whole space that extends $\phi$. Consider the point $\phi'^{-1} (z)$. It satisfies the property that $\delta( A \cup \phi'^{-1}(z) ) = f(A)$ for all $A \subseteq F$.
\end{proof}

\section{The relationship between $(\mathbb{U}, \delta_{\mathbb{U}})$ and the Urysohn metric space} 
We  denote the Urysohn metric space by $(\UU_{\mathtt m}, d)$. 
The induced metric space of the diversity $(\mathbb{U}, \delta_{\mathbb{U}})$ in, in fact, isometric to $(\UU_{\mathtt m}, d)$. 
We will show that  $(\mathbb{U}, \delta_{\mathbb{U}})$ is neither a diameter diversity nor a Steiner diversity. These notions were  recalled at  the beginning of Section~\ref{s: background}.


\begin{prop}
1.\ The   metric space induced by   $(\mathbb{U}, \delta_{\mathbb{U}})$ is isometric to $(\UU_{\mathtt m},d)$.\\
2.\ $(\mathbb{U}, \delta_{\mathbb{U}})$ is not a diameter diversity. \\
3.\  $(\mathbb{U}, \delta_{\mathbb{U}})$ is not a Steiner diversity.
\end{prop}
\begin{proof}
1. Recall that $(\UU_{\mathtt m},d)$ is up to isometry the unique separable complete metric space with the metric extension property. Since $(\mathbb{U}, \delta_{\mathbb{U}})$ is a separable complete diversity, its induced metric space is also separable and complete. 

It remains to show that the induced metric space has the metric extension property, which for $\mathbb U$ states that for any finite $A\subseteq \mathbb U$ and any $f \colon \mathbb U \rightarrow \mathbb{R}$ satisfying \eqref{eqn:charKatetov} at the beginning of Section~\ref{s:Katetov}
there is a $z \in \mathbb U$ such that 
$d(z,a)=f(a)$ for all $a \in A$.  Such a function~$f$ corresponds to a metric space $(A \cup \{z\},\hat{d})$, in that $f(a)=\hat{d}(a,z)$ for all $a \in A$, where $\hat{d}$ restricted to $A$ coincides with  the induced metric restricted to $A$. Define a diversity $\widehat{\delta}$ on $A \cup \{z\}$ by letting $\widehat{\delta}(B) = \delta_{\mathbb{U}}(B)$ and
 \[
 \widehat{\delta}(B \cup \{z\}) = \delta_{\mathbb{U}}(B) + \min_{b \in B} \hat{d}(b,z)
 \]
 for $B \subseteq A$.  Then $(A \cup \{z\}, \widehat{\delta})$ is a one-point extension of $(A, \delta_{\mathbb{U}})$. Since $(\mathbb{U},\delta_{\mathbb{U}})$ has the (diversity) extension property, $z$ can be identified with a point in $\mathbb{U}$. For any $a \in A$, 
\[
d(a,z)=\delta_{\mathbb{U}}(\{a ,z\}) = \widehat{\delta}(\{a, z\}) = \hat{d}(a,z)= f(a)
\]
as required. So the  metric induced on $\mathbb U$ has the extension property, and therefore is isometric to the Urysohn metric space.
 
 2. Consider the diversity on three points given by $X=\{a,b,c\}$, $\delta(a,b) = \delta(a,c) = \delta(b,c)=1$ and $\delta(a,b,c)=2$.  $(X,\delta)$ is not a diameter diversity, since in that case we would have $\delta(a,b,c)=1$. Since $(\mathbb{U},\delta_{\mathbb{U}})$ is universal, and $(X,\delta)$ is separable and complete, there is a subset of $\mathbb{U}$ that is isometric to $(X,\delta)$. Subsets of diameter diversities are still diameter diversities, so $(\mathbb{U},\delta_{\mathbb{U}})$ is not a diameter diversity. 
 
 3. Consider the diversity on three points given by $X=\{a,b,c\}$, $\delta(a,b)=\delta(a,c)=\delta(b,c)=1$ and $\delta(a,b,c)=1$. Since $(\mathbb{U},\delta_{\mathbb{U}})$ is universal and $(X,\delta)$ is  separable and complete, we can identify $(X,\delta)$ with a subset of $\mathbb{U}$. Suppose that $(\mathbb{U},\delta_{\mathbb{U}})$ is a Steiner diversity. Then we can find trees in $\mathbb{U}$ that cover $(a,b,c)$ and have total weight arbitrarily close to $\delta(a,b,c)=1$. Suppose we have a tree with total weight less than 1.25, covering $\{a,b,c\}$. We can assume that the tree has leaves $a, b, c$ and a single internal node $z$, which is possibly not distinct from $a, b, c$. Let the branches of the tree have lengths $\alpha, \beta, \gamma$, corresponding to the leaves $a,b,c$ respectively. Now we have 
 \[
 \alpha + \beta \geq 1, \  \  \beta + \gamma \geq 1, \  \ \alpha + \gamma \geq 1, \ \ \alpha + \beta + \gamma < 1.25.
 \]
 Summing the first three inequalities and dividing by 2 gives $\alpha + \beta + \gamma \geq 3/2$ which contradicts the final inequality. Therefore, $(\mathbb{U}, \delta_{\mathbb{U}})$ is not a Steiner diversity.
\end{proof}

\section{Questions}

Many questions that have been considered for the Urysohn metric space also make sense for the Urysohn diversity. 
For instance,  it is easy to 
 show that $(\mathbb{U}, \delta_{\mathbb{U}})$ is compact homogeneous, namely,  any isomorphic compact subdiversities  are automorphic. This  follows the construction in  Melleray~\cite[Section 4.5]{Melleray2008a} for the metric case.	 
It would be worthwhile to  
study the isometry   group of $(\mathbb{U}, \delta_{\mathbb{U}})$ along the lines of the results surveyed  in~\cite[Section 4.5]{Melleray2008a}. 

Another interesting avenue to explore is determining universal and ultrahomogeneous structures for restricted classes of diversities, as is described for metric spaces in \cite{van2010}.
For example, for $S \subseteq [0,\infty)$ let $\mathcal{M}_S$  be the class of all finite metric spaces with distances taking values in $S$. One can ask for each $S$ whether there is a metric space that is universal and ultrahomogeneous with respect to metrics in $\mathcal{M}_S$. For example, if $S=[0,\infty)$ then the corresponding structure is the Urysohn space. Delhomm\'e, Laflamme, Pouzet, and Sauer \cite{delhomme2007} give a complete characterization of which sets $S$ admit such a structure. These questions have natural analogues for diversities.

\bibliographystyle{plain}

\end{document}